\newcommand{\excise}[1]{}
\newtheorem{thm}{Theorem}[section]
\newtheorem{lemma}[thm]{Lemma}
\newtheorem{cor}[thm]{Corollary}
\newtheorem{prop}[thm]{Proposition}
\theoremstyle{definition}
\newtheorem{example}[thm]{Example}
\newtheorem{remark}[thm]{Remark}
\newtheorem{defn}[thm]{Definition}
\numberwithin{equation}{section}
\renewcommand\>{\rangle}
\newcommand\<{\langle}
\newcommand\NN{\mathbb{N}}
\newcommand\RR{\mathbb{R}}
\newcommand\ZZ{\mathbb{Z}}
\newcommand\til{\mathord\sim}
\DeclareMathOperator\Betti{Betti} 
\DeclareMathOperator\LD{LD} 
\DeclareMathOperator\lcm{lcm} 
\begin{document}

\mbox{}
\title[Length density and numerical semigroups]{Length density and numerical semigroups}

\author[Brower ET AL.]{Cole Brower}
\address{Mathematics Department\\San Diego State University\\San Diego, CA 92182}
\email{cole.brower@gmail.com}

\author[]{Scott Chapman}
\address{Department of Mathematics and Statistics\\Sam Houston State University\\Huntsville, TX  77341}
\email{scott.chapman@shsu.edu}

\author[]{Travis Kulhanek}
\address{Mathematics Department\\University of California Los Angeles\\Los Angeles, CA 90095}
\email{volcanek47@gmail.com}

\author[]{Joseph McDonough}
\address{Mathematics Department\\San Diego State University\\San Diego, CA 92182}
\email{joe.mcdonough@live.com}

\author[]{Christopher O'Neill}
\address{Mathematics Department\\San Diego State University\\San Diego, CA 92182}
\email{cdoneill@sdsu.edu}

\author[]{Vody Pavlyuk}
\address{Mathematics Department\\San Diego State University\\San Diego, CA 92182}
\email{vody.pa@gmail.com}

\author[]{Vadim Ponomarenko}
\address{Mathematics Department\\San Diego State University\\San Diego, CA 92182}
\email{vponomarenko@sdsu.edu}

\date{\today}

\begin{abstract}
Length density is a recently introduced factorization invariant, assigned to each element $n$ of a cancellative commutative atomic semigroup $S$, that measures how far the set of factorization lengths of $n$ is from being a full interval.  We examine length density of elements of numerical semigroups (that is, additive subsemigroups of the non-negative integers).  
\end{abstract}

\maketitle


\section{Introduction}
\label{sec:intro}

A numerical semigroup $S$ is an additively closed subset of $\ZZ_{\ge 0}$ containing 0, usually specified using a generating set $n_1, \ldots, n_k$, i.e.,
$$S = \<n_1, \ldots, n_k\> = \{z_1n_1 + z_2n_2 + \cdots + z_kn_k \mid z_1, \ldots, z_k \in \ZZ_{\ge 0}\}.$$
A \emph{factorization} of an element of $S$ is an expression of the form $z_1n_1 + \cdots + z_kn_k$.  
Many classical problems surrounding factorizations in semigroup theory involve so-called factorization invariants, which are arithmetic quantities, often combinatorial in nature, that capture some precise aspect of non-uniqueness.  For instance, the elasticity invariant equals the quotient of the maximum and minimum factorization lengths of an element $n \in S$, and the delta set invariant $\Delta_S(n)$ contains the ``gaps'' in the set of possible factorization lengths of $n$.  

A recent paper \cite{lengthdensity} introduced a new invariant, known as \emph{length density}, that measures the sparse-ness of the length set, defined in such a way that $\LD_S(n) = 1$ if and only if the length set of $n$ is a full interval.  
The length density of any numerical semigroup $S$ has the following immedate upper and lower bounds \cite[Proposition 2.3]{lengthdensity}:
$$\frac{1}{\max\Delta(S)} \le \LD(S) \le \frac{1}{\min\Delta(S)}.$$
It is not hard to see that the second inequality above is strict precisely when $|\Delta(S)| > 1$.  The strictness of the first inequality, on the other hand, turns out to be more nuanced.  With this in mind, we introduce and study the following property:\ we say $S$ is 
\emph{tasty}\footnote{This term was chosen since the McNugget semigroup is tasty; see Example~\ref{e:mcnugdensity}.}
if $\LD(S) > 1/\max\Delta(S)$, and \emph{bland} otherwise.  In order for $S$ to be bland, some $n \in S$ must have $\Delta_S(n) = \{\max\Delta(S)\}$, a phenomenon that has been studied in the context of Krull monoids~\cite{subdeltas}.  As such, determining when $S$ is tasty necessitates sufficient control over the delta sets of elements of $S$.  

The aim of the present manuscript is to investigate how common the tasty and bland properties are among numerical semigroups. 
Our results are as follows.  After reviewing the necessary background in Section~\ref{sec:background}, we provide in Section~\ref{sec:asymptotic} an algorithm for computing $\LD(S)$ for any numerical semigroup $S$, and characterize the asymptotic behavior of length density for large elements of $S$.  In the remaining sections, we examine length density, and in particular the tastiness property, for several well-studied families of numerical semigroups.  Several interesting consequences are also obtained.  
\begin{itemize}
\item 
Any numerical semigroup with at most 3 generators has its length density achieved at a Betti element (Theorem~\ref{t:3genbettidensity}).  Note that a 4-generated numerical semigroup without this property is known (Example~\ref{e:notacceptedatbetti}).  

\item 
If $S$ has maximal embedding dimension and prime multiplicity, then the delta set of any Betti element containing $\max\Delta(S)$ must be a singleton.  This resembles a property of certain Krull monoids~\cite{subdeltas} that generally does not hold for numerical semigroups.  

\end{itemize}

\section{Background}
\label{sec:background}

We open with a series of definitions.

\begin{defn}\label{d:numerical}
A \emph{numerical semigroup} is a subset of $\ZZ_{\ge 0}$ of the form
$$S = \<n_1, \ldots, n_k\> = \{z_1n_1 + \cdots + z_kn_k : z_1, \ldots, z_k \in \ZZ_{\ge 0}\}$$
for the semigroup generated by $n_1, \ldots, n_k$.  
A \emph{factorization} of $n \in S$ is an expression 
$$n = z_1n_1 + \cdots + z_kn_k$$
of $n$ as a sum of generators of $S$, and the \emph{length} of a factorization is the sum $z_1 + \cdots + z_k$.  The \emph{set of factorizations} of $n$ is the set
$$\mathsf Z_S(n) = \{z \in \ZZ_{\ge 0}^k : n = z_1n_1 + \cdots + z_kn_k\}$$
viewed as a subset of $\ZZ_{\ge 0}^k$, and the \emph{length set} of $n$ is the set 
$$\mathsf L_S(n) = \{z_1 + \cdots + z_k : z \in \mathsf Z_S(n)\},$$
of all possible factorization lengths of $n$.  Writing $\mathsf L_S(n) = \{\ell_1 < \cdots < \ell_r\}$, define 
$$\Delta_S(n) = \{\ell_i - \ell_{i-1} : 2 \le i \le r\} \qquad \text{and} \qquad \Delta(S) = \bigcup_{n \in S} \Delta_S(n)$$
as the \emph{delta sets} of $n$ and $S$, respectively.  
\end{defn}

\begin{defn}\label{d:minimalpresentations}
Fix a numerical semigroup $S = \<n_1, \ldots, n_k\>$.  
The \emph{factorization homomorphism} of $S$ is the function $\varphi_S:\ZZ_{\ge 0} \to S$ given by
$$\varphi_S(z_1, \ldots, z_k) =  z_1n_1 + \cdots + z_kn_k$$
sending each $k$-tuple to the element of $S$ it is a factorization of.  The \emph{kernel} of $\varphi_S$ is the equivalence relation $\til = \ker \varphi_S$ that sets $z \sim z'$ whenever $\varphi_S(z) = \varphi_S(z')$.  The kernel $\til$ is in fact a \emph{congruence}, meaning that $z \sim z'$ implies $(z + z'') \sim (z' + z'')$ for all $z, z', z'' \in \NN^k$.  A subset $\rho \subset \ker\varphi_S$, viewed as a subset of $\NN^k \times \NN^k$, is a \emph{presentation} of $S$ if $\ker\varphi_S$ is the smallest congruence on $\NN^k$ containing $\rho$.  We say $\rho$ is a \emph{minimal presentation} if it is minimal with respect to containment among presentations for $S$.  
\end{defn}

\begin{defn}\label{d:factorizationgraph}
The \emph{factorization graph} of an element $n$ of a numerical semigroup $S$ is the graph $\nabla_n$ whose vertices are the elements of $\mathsf Z_S(n)$ in which two factorizations $z, z' \in \mathsf Z_S(n)$ are connected by an edge if $z_i > 0$ and $z_i' > 0$ for some $i$.  If $\nabla_n$ is disconnected, we say $n$ is a \emph{Betti element} of $S$, and we write $\Betti(S)$ for the set of Betti elements of $S$.  
\end{defn}

The following theorem is a summary of multiple results which can be found in \cite[Section 5.3]{AAG}.

\begin{thm}\label{t:allminimalpresentations}
A presentation $\rho$  of a finitely generated semigroup $S = \<n_0, \ldots, n_k\>$ is minimal if and only if for every $n \in S$, (i) the number of connected components in the graph $\nabla_n$ is one more than the number of relations in $\rho$ containing factorizations of $n$, and (ii) adding an edge to $\nabla_n$ corresponding to each such relation in $\rho$ yields a connected graph.  
\end{thm}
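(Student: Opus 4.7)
The plan is to reduce the theorem to a graph-theoretic characterization of presentations, using an augmented version of $\nabla_n$. For a subset $\rho \subseteq \ker\varphi_S$ and an element $n \in S$, let $\nabla_n^\rho$ denote the graph obtained from $\nabla_n$ by adjoining, for each relation $(u,v) \in \rho$ with $\varphi_S(u) = n$, an extra edge between $u$ and $v$. I would first establish the intermediate claim that $\rho$ is a presentation of $S$ if and only if $\nabla_n^\rho$ is connected for every $n \in S$.

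For the ``only if'' direction, given $z, z' \in \mathsf{Z}_S(n)$, the fact that $\rho$ presents $S$ yields a chain $z = z^{(0)}, \ldots, z^{(m)} = z'$ in $\mathsf{Z}_S(n)$ where consecutive pairs differ by applying one relation $(u,v) \in \rho$ shifted by some $t \in \NN^k$, so $z^{(j)} = u + t$ and $z^{(j+1)} = v + t$. If $t = 0$ this is a $\rho$-edge of $\nabla_n^\rho$; otherwise any $i \in \supp(t)$ witnesses a shared generator, giving an edge of $\nabla_n$. Either way, $z$ and $z'$ lie in the same component of $\nabla_n^\rho$. For the converse, I would induct on the value of $n$ in $\ZZ_{\ge 0}$ to show any two factorizations of $n$ are congruent modulo $\rho$. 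A path in $\nabla_n^\rho$ from $z$ to $z'$ consists of $\rho$-edges (direct applications of $\rho$) and shared-generator edges; for the latter, writing $z = e_i + w$ and $z' = e_i + w'$ reduces the question to $w \sim w'$ as factorizations of $n - n_i$, which holds by induction.

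With this intermediate claim in hand, the theorem follows by a local bridge analysis. A presentation $\rho$ is minimal if and only if each $(u,v) \in \rho$ is a bridge of $\nabla_{\varphi_S(u)}^\rho$, since removing a non-bridge relation leaves $\nabla_n^\rho$ connected for every $n$ and thus still yields a presentation. Writing $c_n$ and $r_n$ for the number of components of $\nabla_n$ and the number of relations of $\rho$ at level $n$ respectively, contracting the components of $\nabla_n$ inside $\nabla_n^\rho$ yields a multigraph on $c_n$ vertices with $r_n$ edges; connectivity of $\nabla_n^\rho$ together with the bridge property force this multigraph to be a spanning tree, so $r_n = c_n - 1$ and the $\rho$-edges at level $n$ bridge all components. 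This is precisely conditions (i) and (ii), and at non-Betti $n$ one automatically has $c_n = 1$ and $r_n = 0$.

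The main obstacle I expect is the inductive step in the ``if'' direction of the intermediate claim, which must be set up with care: one needs a well-ordering on $S$ under which $n - n_i < n$ (the order inherited from $\ZZ_{\ge 0}$ works), a vacuous base case where $\mathsf{Z}_S(n)$ is a singleton, and a verification that the reduction produces factorizations of a strictly smaller element so the hypothesis applies. A secondary subtlety is the loop-removal step in the bridge analysis: relations $(u,v)$ whose two sides already lie in the same component of $\nabla_n$ must be shown to be redundant, which follows because the shared-generator connections within that component, through the intermediate claim applied at strictly smaller elements, already realize the congruence $u \sim v$.
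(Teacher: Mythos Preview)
The paper does not prove this theorem; it is stated as a summary of results from \cite[Section~5.3]{AAG} and cited without argument. So there is no ``paper's own proof'' to compare against.

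Your proposal is correct and is essentially the standard argument one finds in the cited reference. The intermediate claim---that $\rho$ is a presentation if and only if $\nabla_n^\rho$ is connected for every $n$---is the key lemma, and your sketch of both directions is sound: the forward direction unpacks a $\rho$-chain into a path of $\nabla_n^\rho$, and the backward direction inducts on $n$ along shared-generator edges. The bridge/contraction analysis that follows is also correct: once $\rho$ is known to be a presentation, minimality is equivalent to every $\rho$-edge being a bridge in its $\nabla_n^\rho$, and contracting the $\nabla_n$-components reduces this to the statement that a connected multigraph on $c_n$ vertices with $r_n$ edges has all edges bridges if and only if $r_n = c_n - 1$ (i.e., it is a tree). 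Your remark about loops handles the case of a relation whose two sides already share a $\nabla_n$-component; such an edge becomes a loop after contraction and hence cannot be a bridge, so it is redundant. One small point worth making explicit when you write this up: in the chain for the forward direction of the intermediate claim, allow the relation to be applied in either orientation, since the congruence closure is symmetric.
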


Our next definition yields our main property of interest.

\begin{defn}\label{d:lengthdensity}
Fix a numerical semgiroup $S = \<n_1, \ldots, n_k\>$.  The \emph{length density} of $n \in S$ with $|\mathsf L_S(n)| \ge 2$ is given by
$$\LD_S(n) = \frac{|\mathsf L(n)| - 1}{\max\mathsf L_S(n) - \min\mathsf L_S(n)}$$
and the \emph{length density} of $S$ is 
$$\LD(S) = \inf\{\LD_S(n) : n \in S, \, |\mathsf L_S(n)| \ge 2\}.$$
It is not hard to prove (see~\cite{lengthdensity}) that
$$\frac{1}{\max\Delta(S)} \le \LD(S) \le \frac{1}{\min\Delta(S)}.$$
We say $S$ is \emph{tasty} if 
$$\LD(S) > \frac{1}{\max\Delta(S)}$$
and \emph{bland} otherwise.  
\end{defn}

Before moving forward, we list a few basic results from \cite{lengthdensity} concerning the length density.

\begin{thm}[{\cite[Theorem 3.4 and Proposition 3.2]{lengthdensity}}]
For any numerical semigroup $S$, some $n \in S$ satisfies $\LD_S(n) = \LD(S)$.  Additionally, $S$ is bland if and only if 
$$\LD_S(b) = \frac{1}{\max\Delta(S)}$$
for some $b \in \Betti(S)$, which in particular occurs if $\Delta_S(b) = \{\max\Delta(S)\}$.  
\end{thm}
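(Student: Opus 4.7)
My plan is to handle the two assertions separately. For the first (existence of $n$ attaining $\LD(S)$), the key input is that the length sets of a numerical semigroup stabilize: by the structure theorem for sets of lengths in finitely generated cancellative monoids, specialized to the numerical setting, there is a bound $N$ such that for every $n \in S$ with $n \ge N$, the length set $\mathsf L_S(n)$ is an arithmetic progression with common difference $\min\Delta(S)$. For any such $n$ we have $\LD_S(n) = 1/\min\Delta(S)$ directly from the definition, so the set $\{\LD_S(n) : n \in S,\, |\mathsf L_S(n)| \ge 2\}$ is contained in a finite set of length densities (for $n < N$) together with the single value $1/\min\Delta(S)$. Its infimum is therefore a minimum.

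For the blandness biconditional, the engine is the pointwise identity
$$\LD_S(n) = \tfrac{1}{\max\Delta(S)} \iff \Delta_S(n) = \{\max\Delta(S)\},$$
which follows by writing $\mathsf L_S(n) = \{\ell_1 < \cdots < \ell_r\}$ and noting
$$\max\mathsf L_S(n) - \min\mathsf L_S(n) = \sum_{i=2}^{r}(\ell_i - \ell_{i-1}) \le (r-1)\max\Delta(S),$$
with equality exactly when every gap equals $\max\Delta(S)$. The $(\Leftarrow)$ direction of the biconditional and the ``in particular'' clause follow immediately: $\Delta_S(b) = \{\max\Delta(S)\}$ gives $\LD_S(b) = 1/\max\Delta(S)$, and combined with the lower bound in Definition~\ref{d:lengthdensity} this forces $\LD(S) = 1/\max\Delta(S)$.

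For the $(\Rightarrow)$ direction, assume $S$ is bland, apply the first part to obtain $n^* \in S$ with $\LD_S(n^*) = 1/\max\Delta(S)$, and use the identity to conclude $\Delta_S(n^*) = \{\max\Delta(S)\}$. I would then take $n^*$ minimal (in $\ZZ$) among elements with this property and argue $n^* \in \Betti(S)$ by contradiction. If $\nabla_{n^*}$ were connected, Theorem~\ref{t:allminimalpresentations} would force no relation of a minimal presentation of $S$ to involve factorizations of $n^*$, so any two factorizations of $n^*$ are linked by a chain of factorizations each of which shares a common generator with its successor. Stripping a common generator from a suitably chosen adjacent pair should produce a strictly smaller element of $S$ retaining $\{\max\Delta(S)\}$ as its delta set, contradicting the minimality of $n^*$.

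The main obstacle I foresee is in making this last reduction rigorous: stripping one generator from a single adjacent pair of factorizations only immediately yields two lengths at the required distance within a smaller element, not a full singleton delta set. Resolving this will likely require processing the entire connectivity chain of factorizations simultaneously, or invoking sharper structural tools such as the fact (cf.~\cite{AAG}) that $\max\Delta(S)$ is already witnessed within some Betti element, combined with a careful decomposition of long factorizations of $n^*$ via relations in the minimal presentation.
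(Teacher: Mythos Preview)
The paper does not prove this theorem; it is quoted from~\cite{lengthdensity} as background, so there is no ``paper's own proof'' to compare against. That said, your proposal has a genuine error and an acknowledged gap.

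\textbf{Part 1 is based on a false premise.} It is not true that $\mathsf L_S(n)$ is an arithmetic progression for all sufficiently large $n$ in a numerical semigroup. The correct structure is the one described in Lemma~\ref{l:L1L3}: the length set decomposes as $\mathsf L_1(n) \cup \mathsf L_2(n) \cup \mathsf L_3(n)$ where only the middle piece $\mathsf L_2(n)$ is an arithmetic progression with difference $d = \min\Delta(S)$, while the ``ends'' $\mathsf L_1(n)$ and $\mathsf L_3(n)$ are periodic in $n$ but may contain larger gaps. Concretely, for $S = \langle 6,9,20\rangle$ one has $\Delta_S(60) = \{1,4\}$, and by the eventual periodicity from~\cite{compasympdelta} the same delta set recurs for arbitrarily large $n$, so $\LD_S(n)$ is not eventually equal to $1/d$. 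What \emph{is} true (Corollary~\ref{c:ldquasi}) is that $\LD_S(n) \to 1/d$ as $n \to \infty$; combined with the observation that either $|\Delta(S)| = 1$ (and then every $\LD_S(n)$ equals $1/d$) or $\LD(S) < 1/d$ (so only finitely many $n$ have $\LD_S(n)$ near the infimum), this salvages the conclusion. Your argument as written does not.

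\textbf{Part 2, forward direction, is indeed incomplete}, as you note. Stripping a common atom from one adjacent pair in $\nabla_{n^*}$ produces a smaller element with \emph{two} lengths at distance $\max\Delta(S)$, but gives no control over the other lengths of that smaller element, so its delta set need not be the singleton $\{\max\Delta(S)\}$. The route you gesture at in the final sentence---invoking the result from~\cite{deltabetti} that $\max\Delta(S)$ is already realized in $\Delta_S(b)$ for some $b \in \Betti(S)$---is closer to what is actually needed, but one still has to rule out the possibility that every such Betti element has $|\Delta_S(b)| \ge 2$ while some non-Betti $n$ has $\Delta_S(n) = \{\max\Delta(S)\}$. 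The argument in~\cite{lengthdensity} handles this by working directly with chains of factorizations and minimal presentations rather than by a minimality-of-$n^*$ reduction.
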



\section{Asymptotics and computation}
\label{sec:asymptotic}

In this section, we characterize asymptotic behavior of $\LD_S(n)$ for a given numerical semigroup $S$.  Like many other factorization invariants~\cite{factorhilbert,numericalsurvey}, for large $n$ the function $\LD_S(n)$ coincides with a quotient of quasilinear functions of $n$ (Theorem~\ref{t:ldquasi}).  The primary consequence is an explicit upper bound on the smallest element of $S$ with length density $\LD(S)$ (Corollary~\ref{c:ldquasi}), and thus an algorithm to compute $\LD(S)$ (Remark~\ref{r:computeld}).  

\begin{example}\label{e:mcnugdensity}
Figure~\ref{f:mcnugdensity} depicts the function $\LD_S(n)$ for $S = \<6, 9, 20\>$.  Note in particular that $S$ is tasty since $\LD(S) = \frac{4}{7} > 1/\max\Delta(S) = \tfrac{1}{4}$.  
\end{example}

\begin{figure}
\begin{center}
\includegraphics[height=2.5in]{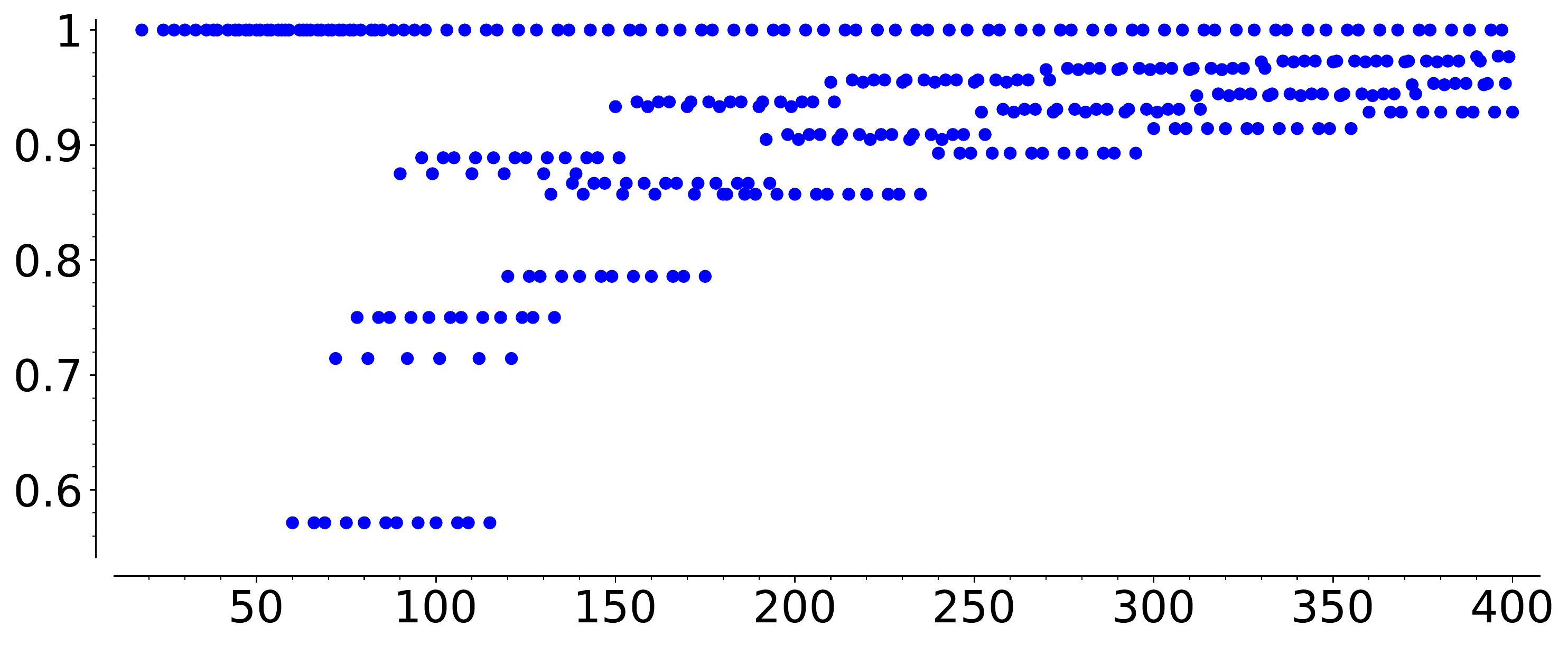}
\end{center}
\caption{A plot with a point at $(n, \LD_S(n))$ for each $n \in S = \<6, 9, 20\>$.}
\label{f:mcnugdensity}
\end{figure}

Throughout this section, suppose $S = \<n_1 < \cdots < n_k\>$ is a numerical semigroup with $\gcd(n_1, \ldots, n_k) = 1$, and let
$$
d = \min \Delta(S)
\qquad \text{and} \qquad
L = \frac{n_k - n_1}{d \gcd(n_1, n_k)}.
$$
Note $L \in \ZZ$ since $d = \gcd(n_2 - n_1, \ldots, n_k - n_{k-1})$ by \cite[Proposition~2.9]{delta}.  

We begin by recalling some pertinent results from~\cite{compasympdelta}, wherein the authors identify a constant~$N_S$, given as a (large) formula in terms of $n_1, \ldots, n_k$, such that 
$$\Delta_S(n + \lcm(n_1, n_k)) = \Delta_S(n).$$
for all $n \ge N_S$ \cite[Corollary~14]{compasympdelta}.  Letting
\begin{align*}
\mathsf L_1(n) &= \{\ell \in \mathsf L_S(n) \mid \tfrac{1}{n_1}n + N_S(\tfrac{1}{n_2} - \tfrac{1}{n_1}) < \ell \le \tfrac{1}{n_1}n\}, \\
\mathsf L_2(n) &= \{\ell \in \mathsf L_S(n) \mid \tfrac{1}{n_k}n + N_S(\tfrac{1}{n_{k-1}} - \tfrac{1}{n_k}) \le \ell \le \tfrac{1}{n_1}n + N_S(\tfrac{1}{n_2} - \tfrac{1}{n_1})\}, \\
\mathsf L_3(n) &= \{\ell \in \mathsf L_S(n) \mid \tfrac{1}{n_k}n \le \ell < \tfrac{1}{n_k}n + N_S(\tfrac{1}{n_{k-1}} - \tfrac{1}{n_k})\}
\end{align*}
so that 
$$\mathsf L_S(n) = \mathsf L_1(n) \cup \mathsf L_2(n) \cup \mathsf L_3(n)$$
is a disjoint union, the following also follows directly from the work in~\cite{compasympdelta}.  

\begin{lemma}\label{l:L1L3}
If $n \ge N_S$, then 
$$
\mathsf L_1(n + n_1) = \{\ell + 1 \mid \ell \in \mathsf L_1(n)\}
\qquad \text{and} \qquad
\mathsf L_3(n + n_k) = \{\ell + 1 \mid \ell \in \mathsf L_3(n)\},
$$
and in particular, 
$$
|\mathsf L_1(n + n_1)| = |\mathsf L_1(n)|, 
\qquad
|\mathsf L_3(n + n_k)| = |\mathsf L_3(n)|,
\qquad \text{and} \qquad
\Delta(\mathsf L_2(n)) = \{d\}.
$$
\end{lemma}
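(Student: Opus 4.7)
The plan is to verify each of the three assertions by exploiting the three-region decomposition of $\mathsf L_S(n)$. The cleanest statement, $\Delta(\mathsf L_2(n)) = \{d\}$, I would derive directly from the asymptotic periodicity result of \cite{compasympdelta} cited in the paragraph above: since $\Delta_S(n)$ stabilizes for $n \ge N_S$, the cited machinery provides that the ``middle block'' of $\mathsf L_S(n)$, bounded on top by lengths stabilized modulo $n_1$ and on the bottom by lengths stabilized modulo $n_k$, consists of a single arithmetic progression with common difference $d = \min\Delta(S)$. This identifies that middle block precisely with $\mathsf L_2(n)$ as defined above.

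For the identity $\mathsf L_1(n+n_1) = \{\ell+1 : \ell \in \mathsf L_1(n)\}$, I would argue by mutual inclusion. For $(\supseteq)$, if $\ell \in \mathsf L_1(n)$ is witnessed by a factorization $z$, then $z + e_1$ is a factorization of $n+n_1$ of length $\ell+1$, and the two defining inequalities of $\mathsf L_1$ shift in lockstep: replacing $n$ by $n+n_1$ increases the right-hand bound $\tfrac{1}{n_1}n$ by exactly $1$, and the same for the lower threshold $\tfrac{1}{n_1}n + N_S(\tfrac{1}{n_2}-\tfrac{1}{n_1})$. For $(\subseteq)$, take $\ell' \in \mathsf L_1(n+n_1)$ and any factorization $z$ of $n+n_1$ with $|z| = \ell'$. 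The crux is to force $z_1 \ge 1$, so that $z - e_1$ yields a factorization of $n$ of length $\ell'-1$. If instead $z_1 = 0$, then $n+n_1 \ge n_2\,\ell'$, giving $\ell' \le (n+n_1)/n_2$; combined with the defining strict lower bound on $\ell'$, this produces the inequality $(n+n_1-N_S)(\tfrac{1}{n_1} - \tfrac{1}{n_2}) < 0$, which is impossible since $n_1 < n_2$ and $n \ge N_S$. Once $\ell'-1 \in \mathsf L_S(n)$ is established, verifying that $\ell'-1$ lies in the window defining $\mathsf L_1(n)$ is again a direct shift of the inequalities.

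The argument for $\mathsf L_3(n+n_k) = \{\ell+1 : \ell \in \mathsf L_3(n)\}$ is the mirror image. For the forward inclusion one adds one copy of $n_k$; for the reverse, one shows any factorization of $n+n_k$ achieving a length in $\mathsf L_3(n+n_k)$ must have $z_k \ge 1$. Here the analogous dichotomy is that $z_k = 0$ would force $\ell' \ge (n+n_k)/n_{k-1}$, contradicting the upper bound $\ell' < \tfrac{1}{n_k}(n+n_k) + N_S(\tfrac{1}{n_{k-1}} - \tfrac{1}{n_k})$ after the same elementary manipulation (using $n_{k-1} < n_k$ and $n \ge N_S$). The cardinality statements $|\mathsf L_1(n+n_1)| = |\mathsf L_1(n)|$ and $|\mathsf L_3(n+n_k)| = |\mathsf L_3(n)|$ are then immediate corollaries of the shift bijection.

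The main technical obstacle I anticipate is bookkeeping with the strict-versus-weak inequalities in the definitions of $\mathsf L_1$, $\mathsf L_2$, $\mathsf L_3$: the bounds are calibrated exactly so that the forcing arguments succeed with the hypothesis $n \ge N_S$ and so that no length is counted twice across the three pieces. Handling the boundary cases carefully, and confirming that the periodicity result of \cite{compasympdelta} really delivers the arithmetic-progression structure on $\mathsf L_2(n)$ rather than merely a structure with delta-set equal to $\{d\}$ (which could in principle permit a gap of length $>d$ at the boundary with $\mathsf L_1$ or $\mathsf L_3$), is where the care lies.
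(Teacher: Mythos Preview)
Your proposal is correct. The paper does not actually supply a proof of this lemma; it states only that the result ``follows directly from the work in~\cite{compasympdelta}''. Your treatment of the two shift identities is therefore more explicit than the paper's: by bounding $\ell'$ above by $(n+n_1)/n_2$ (resp.\ below by $(n+n_k)/n_{k-1}$) when $z_1 = 0$ (resp.\ $z_k = 0$) and playing this against the defining window for $\mathsf L_1$ (resp.\ $\mathsf L_3$), you obtain a self-contained elementary argument that uses nothing from~\cite{compasympdelta} beyond the existence of the constant $N_S$. For the third claim $\Delta(\mathsf L_2(n)) = \{d\}$ you, like the paper, defer to~\cite{compasympdelta}; your closing caveat is apt, but the bounds defining $\mathsf L_2$ are calibrated precisely so that the structure theorem there yields a genuine arithmetic progression on that interval. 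One cosmetic slip: in the $\mathsf L_3$ paragraph you call the ``add one copy of $n_k$'' step the forward inclusion, whereas by your own convention in the $\mathsf L_1$ paragraph it is the $(\supseteq)$ direction; since both directions are present this is harmless.
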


The following appeared as \cite[Theorem~4.5]{factorhilbert} but without an explicit lower bound.  We provide here a proof that $N_S$ is such a lower bound.  

\begin{thm}\label{t:lenquasi}
If $n \ge N_S$, then 
$$|\mathsf L(n + \lcm(n_1, n_k))| = |\mathsf L(n)| + L.$$
\end{thm}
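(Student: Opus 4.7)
The plan is to exploit the disjoint decomposition $\mathsf L_S(n) = \mathsf L_1(n) \cup \mathsf L_2(n) \cup \mathsf L_3(n)$ preceding Lemma~\ref{l:L1L3} and handle each piece separately, writing $M = \lcm(n_1, n_k)$.  Since $n \ge N_S$ implies $n + jn_1 \ge N_S$ for every $j \ge 0$, I will iterate the identity $|\mathsf L_1(n+n_1)| = |\mathsf L_1(n)|$ from Lemma~\ref{l:L1L3} a total of $M/n_1$ times to obtain $|\mathsf L_1(n+M)| = |\mathsf L_1(n)|$; iterating the analogous identity for $\mathsf L_3$ yields $|\mathsf L_3(n+M)| = |\mathsf L_3(n)|$.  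The claim thus reduces to showing $|\mathsf L_2(n+M)| - |\mathsf L_2(n)| = L$.

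For the middle piece, since $\Delta(\mathsf L_2(n)) = \{d\}$ by Lemma~\ref{l:L1L3}, $\mathsf L_2(n)$ is an arithmetic progression of common difference $d$, so $|\mathsf L_2(n)| = (\max \mathsf L_2(n) - \min \mathsf L_2(n))/d + 1$ whenever it is nonempty.  The heart of the argument is the pair of boundary-shift identities
$$\max\mathsf L_2(n+M) - \max\mathsf L_2(n) = M/n_1 \qquad \text{and} \qquad \min\mathsf L_2(n+M) - \min\mathsf L_2(n) = M/n_k.$$
One direction of each ($\ge$ for the max, $\le$ for the min) is immediate by appending $M/n_1$ copies of $n_1$ or $M/n_k$ copies of $n_k$ to any factorization of $n$ and checking that the shifted length lies in the defining interval for $\mathsf L_2(n+M)$.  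The reverse direction is the crux: it leverages the set-level identities $\mathsf L_1(n+M) = \mathsf L_1(n) + M/n_1$ and $\mathsf L_3(n+M) = \mathsf L_3(n) + M/n_k$ (also obtained by iterating Lemma~\ref{l:L1L3}) to argue that the ``gaps'' $\min \mathsf L_1(n) - \max\mathsf L_2(n)$ and $\min\mathsf L_2(n) - \max\mathsf L_3(n)$ are invariant under $n \mapsto n + M$.

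Granting the boundary shifts, one computes
$$|\mathsf L_2(n+M)| - |\mathsf L_2(n)| = \frac{M/n_1 - M/n_k}{d} = \frac{M(n_k - n_1)}{n_1 n_k\,d} = \frac{n_k - n_1}{\gcd(n_1, n_k)\,d} = L,$$
completing the proof.  The main obstacle I expect is rigorously justifying that the region-to-region gaps remain fixed under $n \mapsto n + M$; this is where the explicit lower bound $N_S$ from \cite{compasympdelta} should be essential, ensuring that no ``new'' extreme lengths appear in $\mathsf L_2(n+M)$ beyond the shifts of $\max \mathsf L_2(n)$ and $\min \mathsf L_2(n)$ already supplied by the easy direction.
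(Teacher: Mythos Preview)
Your proposal is correct and follows essentially the same route as the paper: decompose $\mathsf L_S(n)$ into the three pieces, invoke Lemma~\ref{l:L1L3} to handle $|\mathsf L_1|$ and $|\mathsf L_3|$, and use $\Delta(\mathsf L_2(n)) = \{d\}$ to reduce $|\mathsf L_2(n+M)| - |\mathsf L_2(n)|$ to a computation with endpoints.  The only cosmetic difference is that the paper bypasses your two boundary-shift identities for $\mathsf L_2$ by substituting $\min\mathsf L_1$ for $\max\mathsf L_2$ and $\max\mathsf L_3$ for $\min\mathsf L_2$ directly in the difference $\tfrac{1}{d}\bigl(\max\mathsf L_2(n+M)-\min\mathsf L_2(n+M)\bigr) - \tfrac{1}{d}\bigl(\max\mathsf L_2(n)-\min\mathsf L_2(n)\bigr)$; since the gap terms $\min\mathsf L_1 - \max\mathsf L_2$ and $\min\mathsf L_2 - \max\mathsf L_3$ appear once with each sign, they cancel, and one can then read off the answer from the iterated set-level identities $\min\mathsf L_1(n+M) = \min\mathsf L_1(n) + M/n_1$ and $\max\mathsf L_3(n+M) = \max\mathsf L_3(n) + M/n_k$ without ever needing your ``easy direction.''  The gap-invariance you flag as the crux is precisely what makes that substitution legitimate, so the two arguments rest on the same underlying fact.
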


\begin{proof}
First, we set $p = \lcm(n_1, n_r)$.  Using Lemma~\ref{l:L1L3}, we have
\begin{align*}
|\mathsf L(n + p)|
&= |\mathsf L_1(n + p)| + |\mathsf L_2(n + p)| + |\mathsf L_3(n + p)| \\
&= |\mathsf L_1(n)| + |\mathsf L_2(n + p)| + |\mathsf L_3(n)| \\
&= |\mathsf L(n)| + |\mathsf L_2(n + p)| - |\mathsf L_2(n)| \\
&= |\mathsf L(n)| + \tfrac{1}{d}(\max\mathsf L_2(n + p) - \min\mathsf L_2(n + p)) - \tfrac{1}{d}(\max\mathsf L_2(n) - \min\mathsf L_2(n)) \\
&= |\mathsf L(n)| + \tfrac{1}{d}(\min\mathsf L_1(n + p) - \max\mathsf L_3(n + p)) - \tfrac{1}{d}(\min\mathsf L_1(n) - \max\mathsf L_3(n)) \\
&= |\mathsf L(n)| + \tfrac{1}{d}(\min\mathsf L_1(n) - \max\mathsf L_3(n) + dL) - \tfrac{1}{d}(\min\mathsf L_1(n) - \max\mathsf L_3(n)) \\
&= |\mathsf L(n)| + L
\end{align*}
as desired.
\end{proof}

\begin{thm}\label{t:ldquasi}
If $n \ge N_S$, then 
$$
\LD(n + \lcm(n_1, n_k)) = \frac{|\mathsf L(n)| - 1 + L}{\max\mathsf L(n) - \min\mathsf L(n) + dL}.
$$
In particular, $\LD(n)$ is a quotient of eventually quasilinear functions of $n$, each with period dividing $\lcm(n_1, n_k)$.
\end{thm}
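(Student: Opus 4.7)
The plan is to compute the numerator and denominator of $\LD_S(n+p)$ separately, where $p = \lcm(n_1, n_k)$, using Theorem~\ref{t:lenquasi} for the numerator and iterating Lemma~\ref{l:L1L3} for the denominator.

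The numerator is immediate from Theorem~\ref{t:lenquasi}:\ subtracting $1$ from both sides of the conclusion there gives $|\mathsf L(n+p)| - 1 = |\mathsf L(n)| - 1 + L$. For the denominator, I would first observe that for $n \ge N_S$ the disjoint decomposition $\mathsf L(n) = \mathsf L_1(n) \cup \mathsf L_2(n) \cup \mathsf L_3(n)$ places the largest lengths (those near $n/n_1$) in $\mathsf L_1(n)$ and the smallest (those near $n/n_k$) in $\mathsf L_3(n)$, so that
$$\max \mathsf L(n) = \max \mathsf L_1(n) \qquad \text{and} \qquad \min \mathsf L(n) = \min \mathsf L_3(n).$$
Since $p/n_1, p/n_k \in \ZZ$, iterating the first identity of Lemma~\ref{l:L1L3} a total of $p/n_1$ times and the second a total of $p/n_k$ times yields
$$\max \mathsf L(n+p) = \max \mathsf L(n) + p/n_1 \qquad \text{and} \qquad \min \mathsf L(n+p) = \min \mathsf L(n) + p/n_k.$$
A routine arithmetic check with $p = n_1 n_k/\gcd(n_1,n_k)$ gives $p/n_1 - p/n_k = (n_k - n_1)/\gcd(n_1, n_k) = dL$, so subtracting the two displayed identities produces $\max \mathsf L(n+p) - \min \mathsf L(n+p) = \max \mathsf L(n) - \min \mathsf L(n) + dL$. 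Dividing the numerator by the denominator then delivers the claimed formula.

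For the quasilinear consequence, I would appeal to the fact that both $|\mathsf L(n)|$ and $\max \mathsf L(n) - \min \mathsf L(n)$ satisfy a recurrence of the form $f(n+p) = f(n) + c$ for all $n \ge N_S$, which is the standard characterization of an eventually quasilinear function with period dividing $p$. Hence $\LD_S(n)$, being their quotient, is a quotient of eventually quasilinear functions with period dividing $\lcm(n_1,n_k)$.

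I do not anticipate any deep obstacle:\ the heavy machinery is already packaged in Theorem~\ref{t:lenquasi} and Lemma~\ref{l:L1L3}. The only points requiring care are (i) checking that the iteration of Lemma~\ref{l:L1L3} remains valid at each intermediate step, which reduces to the trivial observation that $n + jn_1 \ge N_S$ and $n + jn_k \ge N_S$ throughout, both immediate from $n \ge N_S$; and (ii) confirming that the extremes of $\mathsf L(n)$ truly lie in $\mathsf L_1(n)$ and $\mathsf L_3(n)$ respectively, which follows from $n \ge N_S$ together with the threshold definitions of those sets.
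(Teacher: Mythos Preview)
Your proof is correct and follows the same overall shape as the paper's: compute the numerator via Theorem~\ref{t:lenquasi} and the denominator by tracking how $\max\mathsf L$ and $\min\mathsf L$ shift under addition of $p = \lcm(n_1,n_k)$. The one genuine difference is in how you justify the denominator. The paper invokes \cite[Theorems~4.2 and~4.3]{elastsets}, which give
\[
\max\mathsf L(n+n_1) = \max\mathsf L(n) + 1
\qquad\text{and}\qquad
\min\mathsf L(n+n_k) = \min\mathsf L(n) + 1
\]
directly for $n \ge n_k^2$ (and then notes $N_S > n_k^2$). You instead stay within the paper's own machinery, iterating the set-level identities of Lemma~\ref{l:L1L3} and reading off the extremes from the decomposition $\mathsf L(n) = \mathsf L_1(n) \cup \mathsf L_2(n) \cup \mathsf L_3(n)$. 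Your route is more self-contained but carries the small extra obligation of checking that $\max\mathsf L(n) \in \mathsf L_1(n)$ and $\min\mathsf L(n) \in \mathsf L_3(n)$, which you acknowledge in point~(ii); the paper's citation to \cite{elastsets} sidesteps that check entirely. Either way the arithmetic $p/n_1 - p/n_k = dL$ and the quasilinearity conclusion are identical.
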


\begin{proof}
First, let $p = \lcm(n_1, n_k)$.  It follows from \cite[Theorems~4.2 and~4.3]{elastsets} that
$$
\max\mathsf L(n + n_1) = \max\mathsf L(n) + 1
\qquad \text{and} \qquad
\min\mathsf L(n + n_k) = \min\mathsf L(n) + 1
$$
hold for all $n \ge n_k^2$, and since $N_S > n_k^2$, one readily checks that
\begin{align*}
\LD(n + p)
&= \frac{|\mathsf L(s + p)| - 1}{\max(\mathsf L(s + p)) - \min(\mathsf L(s + p))} \\
&= \frac{|\mathsf L(s)| - 1 + L}{(\max(\mathsf L(s)) + \tfrac{1}{n_1}p) - (\min(\mathsf L(s)) + \tfrac{1}{n_k}p)} \\
&= \frac{|\mathsf L(s)| - 1 + L}{\max(\mathsf L(s)) - \min(\mathsf L(s)) + dL}
\end{align*}
holds for all $n \ge N_S$.  
\end{proof}

Our final result of the section is a corollary of Theorem~\ref{t:ldquasi} that is topological in nature, reminiscent of \cite[Theorems~2.1-2.2 and Corollary~2.3]{fullelastic} and \cite[Corollary~4.5]{elastsets} concerning the set of elasticities of a numerical semigroup.  

\begin{cor}\label{c:ldquasi}
We have
$$\LD(S) = \min\{\LD(n) \mid n \in S \text{ and } n < N_S + \lcm(n_1, n_k)\}.$$
Additionally, letting $R(S) = \{\LD(n) : n \in S\}$, the set $R(S) \cap [0,\alpha)$ is finite for each $\alpha \in [0, \tfrac{1}{d}]$, and the only possible accumulation point of $R(S)$ is 
$$\sup R(S) = \lim_{n \to \infty} \LD(n) = \tfrac{1}{d}.$$
\end{cor}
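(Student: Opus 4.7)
The plan is to leverage Theorem~\ref{t:ldquasi} to reduce everything to a one-parameter recursion.  Set $p = \lcm(n_1, n_k)$, and for $n \in S$ with $|\mathsf L(n)| \ge 2$ write $A(n) = |\mathsf L(n)| - 1$ and $B(n) = \max\mathsf L(n) - \min\mathsf L(n)$, so that $\LD(n) = A(n)/B(n)$; Theorem~\ref{t:ldquasi} then reads $\LD(n+p) = (A(n)+L)/(B(n)+dL)$ for $n \ge N_S$.  My first step is a short computation showing
$$\LD(n+p) - \LD(n) = \frac{L\bigl(B(n) - d A(n)\bigr)}{B(n)\bigl(B(n)+dL\bigr)},$$
whose sign is governed by whether $\LD(n) \le 1/d$.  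Since the universal upper bound $\LD(n) \le 1/d$ (Definition~\ref{d:lengthdensity}) always holds, this difference is non-negative, and hence $\LD$ is monotone non-decreasing along every arithmetic progression modulo $p$ once past $N_S$.

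For the first equality, I would fix any $n \in S$ with $n \ge N_S$ and observe that the sequence $n, n+p, n+2p, \ldots$ lies entirely in $S$, with $\LD(n+kp) \ge \LD(n)$ for all $k \ge 0$ by the monotonicity just established.  Grouping the elements of $S \cap [N_S, \infty)$ by their residue class modulo $p$, the minimum of $\LD$ on each class is therefore attained at its smallest representative, which lies in $S \cap [N_S, N_S + p)$.  Consequently the infimum defining $\LD(S)$ is attained on the finite set $S \cap [0, N_S + p)$, yielding the displayed formula.

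For the topological assertion, iterating the recurrence gives
$$\LD(n+kp) = \frac{A(n) + kL}{B(n) + dkL} \longrightarrow \frac{1}{d} \quad \text{as } k \to \infty$$
for each fixed $n \in S \cap [N_S, N_S + p)$.  Since there are only finitely many such starting points, this immediately yields $\lim_{m \to \infty,\, m \in S} \LD(m) = 1/d$, and in particular $\sup R(S) = 1/d$.  For any $\alpha < 1/d$, each of the finitely many monotone sequences $\{\LD(n + kp)\}_{k \ge 0}$ exceeds $\alpha$ from some point on, so only finitely many $m \in S$ satisfy $\LD(m) < \alpha$; thus $R(S) \cap [0, \alpha)$ is finite, and this forces $1/d$ to be the only possible accumulation point of $R(S)$.

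The argument is essentially a direct deduction from Theorem~\ref{t:ldquasi}; the only mildly subtle point is recognizing that the sign of the one-step difference $\LD(n+p) - \LD(n)$ is controlled precisely by the universal inequality $\LD \le 1/d$, after which no serious obstacle remains.
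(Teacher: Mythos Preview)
Your argument is correct and follows essentially the same route as the paper: the paper's proof consists of the single observation that $\LD(n + \lcm(n_1,n_k)) \ge \LD(n)$ for all $n \ge N_S$, from which all three claims are declared to follow. You have simply fleshed out both halves of this sketch---verifying the monotonicity via the explicit difference computation (and correctly identifying that its sign is governed by $\LD(n) \le 1/d$), and spelling out how the minimum, the limit, and the finiteness of $R(S)\cap[0,\alpha)$ follow from that monotonicity together with the iterated formula $\LD(n+kp) = (A(n)+kL)/(B(n)+dkL)$.
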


\begin{proof}
All 3 claims follow from Theorem~\ref{t:ldquasi} and the observation that
$$\LD(n + \lcm(n_1, n_k)) \ge \LD(n)$$
for all $n \ge N_S$.  
\end{proof}

\begin{remark}\label{r:computeld}
Corollary~\ref{c:ldquasi} yield a method of computing $\LD(S)$ from the generators of $S$.  Indeed, $N_S$ can be immeidately computed from the formula in~\cite[Section~3]{compasympdelta}, and the length sets of all $n \le N_S + \lcm(n_1, n_k)$ can be computed relatively quickly using the methods in \cite[Section~3]{dynamicalg}.  Additionally, Theorem~\ref{t:ldquasi} yields an algorithm to compute $\LD_S(n)$ for $n \ge N_S$ whose runtime does not depend on $n$, since one only needs to compute the length set of an appropriate element between $N_S$ and $N_S + \lcm(n_1, n_k)$.  
\end{remark}

\section{Families of numerical semigroups}
\label{sec:families}

In this section, we classify the length density and tastiness of numerical semigroups residing in one of several well-studied families.

\subsection{Supersymmetric numerical semigroups}

A numerical semigroup is called \emph{supersymmetric} if it has exactly one Betti element.  
By \cite[Theorem~12]{uniquebetti}, this occurs if and only if $S$ can be written in the form
$$S = \< \tfrac{s}{t_1}, \tfrac{s}{t_2}, \ldots, \tfrac{s}{t_k}\>$$
for some pairwise coprime $t_1 > \cdots > t_k$ with $s = t_1t_2 \cdots t_k$.  Moreover, $s$ is the unique Betti element in this case, and has length set $\mathsf L(s) = \{t_k, \ldots, t_1\}$.  

\begin{lemma} \label{l:supersymmetric}
For any $m \ge 1$, $|\mathsf L(ms)| \ge mk - m + 1$.
\end{lemma}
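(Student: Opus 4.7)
The plan is to exhibit an explicit collection of factorizations of $ms$ whose lengths form a chain through $mt_k, mt_{k-1}, \ldots, mt_1$, and then count.

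First, I would observe that for every $i \in \{1, \ldots, k\}$ the identity $s = t_i \cdot (s/t_i)$ is a genuine factorization of $s$ of length exactly $t_i$. Summing $m$ such factorizations (with repetitions allowed) produces a factorization of $ms$, and in particular, for each pair $(i,j)$ with $1 \le i \le k-1$ and $0 \le j \le m$, combining $j$ copies of the length-$t_i$ factorization with $m-j$ copies of the length-$t_{i+1}$ factorization yields a factorization of $ms$ of length
$$\ell_{i,j} := (m-j)\,t_{i+1} + j\,t_i.$$
Hence every $\ell_{i,j}$ lies in $\mathsf L(ms)$.

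Next, I would count the distinct values $\ell_{i,j}$. For fixed $i$, the set $\{\ell_{i,j} : 0 \le j \le m\}$ is an arithmetic progression with common difference $t_i - t_{i+1} > 0$, starting at $mt_{i+1}$ and ending at $mt_i$, and so contributes $m+1$ distinct values all lying in the interval $[mt_{i+1}, mt_i]$. Since $t_1 > t_2 > \cdots > t_k$, these $k-1$ intervals are pairwise disjoint except that the $i$-th and $(i+1)$-st overlap at the single point $mt_{i+1}$. Therefore
$$|\{\ell_{i,j}\}| \;=\; (k-1)(m+1) - (k-2) \;=\; mk - m + 1,$$
establishing $|\mathsf L(ms)| \ge mk - m + 1$.

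There is essentially no obstacle here beyond the indexing bookkeeping in the overlap count, which is clean because of the strict ordering $t_1 > \cdots > t_k$. Note that the argument uses only the $k$ lengths $t_1, \ldots, t_k$ guaranteed to lie in $\mathsf L(s)$, so the bound is insensitive to whether $\mathsf L(s) = \{t_k, \ldots, t_1\}$ denotes exactly these $k$ values or a larger set containing them.
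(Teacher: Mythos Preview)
Your proof is correct and takes a genuinely different route from the paper's. The paper argues by induction on $m$: assuming $|\mathsf L(ms)| \ge mk - m + 1$, it shifts $\mathsf L(ms)$ by $t_k$ to land inside $\mathsf L((m+1)s)$, and then appends the $k-1$ new lengths $\ell + t_1, \ldots, \ell + t_{k-1}$ obtained by adding a single copy of each generator to a maximum-length factorization of $ms$. Your argument is instead a direct, non-inductive construction: you write down all ``two-generator'' mixtures $(m-j)t_{i+1} + jt_i$ and count them via the chain of overlapping arithmetic progressions on the intervals $[mt_{i+1}, mt_i]$. Your approach has the advantage of being completely explicit about which $mk - m + 1$ lengths are exhibited, and the overlap bookkeeping is transparent; the paper's induction is slightly more opaque about the actual lengths produced but avoids the interval-counting step. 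Both arguments use only the $k$ lengths $t_1, \ldots, t_k \in \mathsf L(s)$ and nothing deeper about the structure of $S$.
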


\begin{proof}
We proceed by induction on $m$.  For the base case, notice that 
$$|\mathsf L(s)| = |\{t_k, \ldots, t_1\}| \ge (1)k - (1) + 1.$$
Next, assuming $|\mathsf L(ms)| \ge mk - m + 1$, we want to show 
$$|\mathsf L((m+1)s)| \ge (m+1)n - (m+1) + 1.$$
Since $(m+1)s = ms + s$, it follows that 
$$T := \mathsf L(ms) + t_k \subseteq \mathsf L((m+1)s)$$
has at least $mn - m + 1$ elements by our inductive hypothesis.  Let $\ell$ denote the greatest element of $\mathsf L(ms)$, and fix a factorization $(a_1, \ldots, a_k)$ of $ms$ of length $\ell$. Then for each $i$, $(a_1, \ldots, a_i + t_i, \ldots, a_k)$ is a length $\ell + t_i$ factorization of $(m+1)s$.  Since $\ell + t_k = \max T$, 
$$\ell + t_1, \ldots, \ell + t_{k-1} \in \mathsf L((m+1)s) \setminus T,$$
from which we conclude 
$$\left| \mathsf L((m+1)s) \right| \geq (mk - m + 1) + k - 1 = (m+1)k - (m+1) + 1,$$
as desired. 
\end{proof}

\begin{thm}\label{t:supersymmetric}
If $n \in S$ has at least 2 factorizations, then $\LD(n) \geq \LD(s)$.  Moreover, 
$$\LD(S) = \LD(s) = \frac{k-1}{t_1-t_k},$$
and $S$ is tasty if and only if $ t_1, t_2, \ldots, t_k$ does not form an arithmetic progression.  
\end{thm}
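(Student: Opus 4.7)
The plan is to compute $\LD_S(s)$ directly, prove $\LD_S(n) \ge \LD_S(s)$ for every $n$ with multiple factorizations via a normal-form reduction, and deduce the tastiness criterion from an averaging argument on the consecutive gaps in $\mathsf L_S(s)$. Since $\mathsf L_S(s) = \{t_k < t_{k-1} < \cdots < t_1\}$, one immediately reads off $\LD_S(s) = (k-1)/(t_1 - t_k)$.

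For the main inequality, the crucial structural fact is that a factorization $z = (z_1, \ldots, z_k)$ satisfying $0 \le z_i < t_i$ for every $i$ is the unique factorization of $\varphi_S(z)$. This follows from a Chinese-remainder argument: using $s/t_i = \prod_{j \ne i} t_j$ together with the pairwise coprimality of the $t_i$, the congruence $\varphi_S(z) \equiv \varphi_S(z') \pmod{s}$ reduces to $z_i \equiv z_i' \pmod{t_i}$ for each $i$, which forces $z = z'$ when both tuples satisfy $0 \le z_i, z_i' < t_i$. Call such $z$ a \emph{normal form}. There are $\prod_i t_i = s$ normal forms, their images realize $s$ distinct residues mod $s$, and each such image is the smallest element of $S$ in its class (since otherwise the Betti relation would produce a second factorization). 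Consequently every $n \in S$ admits a unique decomposition $n = \varphi_S(\tilde z) + rs$ with $\tilde z$ a normal form and $r \ge 0$; moreover $r \ge 1$ precisely when $n$ has at least two factorizations.

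Now fix $n$ with $|\mathsf L_S(n)| \ge 2$ and pick $z^{\max}, z^{\min} \in \mathsf Z_S(n)$ of maximum and minimum length. Maximality of $|z^{\max}|$ forces $z_i^{\max} < t_i$ for every $i \ge 2$, since otherwise the Betti-element trade replacing $t_i$ copies of $s/t_i$ by $t_1$ copies of $s/t_1$ would produce a strictly longer factorization; symmetrically $z_i^{\min} < t_i$ for $i \le k-1$. Setting $r^- = \lfloor z_1^{\max}/t_1 \rfloor$ and $r^+ = \lfloor z_k^{\min}/t_k \rfloor$, both $z^{\max} - r^- t_1 e_1$ and $z^{\min} - r^+ t_k e_k$ are normal forms whose images are congruent to $n$ mod $s$, so uniqueness forces $r^- = r^+ =: r$ and
$$z^{\max} - rt_1 e_1 \;=\; z^{\min} - rt_k e_k \;=:\; \tilde z.$$
In particular $\max\mathsf L_S(n) - \min\mathsf L_S(n) = r(t_1 - t_k)$. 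Since adding $\tilde z$ to any factorization of $rs$ yields a factorization of $n$, we have $\mathsf L_S(n) \supseteq \mathsf L_S(rs) + |\tilde z|$, and Lemma~\ref{l:supersymmetric} provides $|\mathsf L_S(rs)| \ge r(k-1) + 1$; therefore
$$\LD_S(n) \;\ge\; \frac{r(k-1)}{r(t_1 - t_k)} \;=\; \LD_S(s),$$
and $\LD(S) = \LD_S(s) = (k-1)/(t_1 - t_k)$ follows from the automatic bound $\LD(S) \le \LD_S(s)$.

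For tastiness, $s$ being the unique Betti element yields $\max\Delta(S) = \max\Delta_S(s) = \max_{1 \le i \le k-1}(t_i - t_{i+1})$. The identity $t_1 - t_k = \sum_{i=1}^{k-1}(t_i - t_{i+1})$ exhibits $t_1 - t_k$ as a sum of $k-1$ positive terms each at most $\max\Delta(S)$, so $(k-1)/(t_1 - t_k) \ge 1/\max\Delta(S)$, with equality exactly when every consecutive difference $t_i - t_{i+1}$ agrees---that is, iff $t_1, \ldots, t_k$ is an arithmetic progression. I expect the main obstacle to be the structural reduction step, and in particular the identification $r^- = r^+$: without the Chinese-remainder uniqueness of normal forms the clean identity $\max\mathsf L_S(n) - \min\mathsf L_S(n) = r(t_1 - t_k)$ is not available, and one would otherwise be forced into a more delicate interval-covering argument that controls ascents from $z^{\min}$ and descents from $z^{\max}$ simultaneously.
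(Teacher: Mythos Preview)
Your proof is correct and follows essentially the same strategy as the paper's: decompose $n$ as a multiple of $s$ plus a uniquely factorable remainder, then invoke Lemma~\ref{l:supersymmetric} to bound $|\mathsf L(n)|$ against the exact value of $\max\mathsf L(n)-\min\mathsf L(n)$. The only difference is that the paper imports the decomposition (and the identity $\mathsf L(n)=\mathsf L(ms)+\ell$) from \cite[Theorem~12]{uniquebetti}, whereas you reconstruct it from scratch via the Chinese remainder argument and the explicit reduction of extremal-length factorizations to the common normal form~$\tilde z$; this makes your version self-contained but otherwise parallel.
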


\begin{proof}
By~\cite[Theorem~12]{uniquebetti}, we can write $n = ms + r$ with $m \ge 1$ such that $r \in S$ has unique factorization, say with length $\ell$.  In this case, $\mathsf L(n) = \mathsf L(ms) + \ell$.  Since $\max\mathsf L(ms) = mt_1$ and $\min \mathsf L(ms) = mt_n$, Lemma~\ref{l:supersymmetric} yields
$$\LD(n) = \LD(ms)
= \frac{\left| \mathsf L(ms) \right| - 1}{mt_1 - mt_k}
\ge \frac{(mk - m +1) - 1}{mt_1 - mt_k} = \frac{k-1}{t_1-t_k} = \LD(s).$$
The final claim now follows from the fact that $\Delta(s)$ is a singleton if and only if 
$$\mathsf L(s) = \{t_k, \ldots, t_1\}$$
forms an arithmetic progression.  
\end{proof}

\subsection{Embedding dimension 3 numerical semigroups}

Numerical semigroups with 3 generators have been well-studied, and it is known that their factorization structure is largely determined by whether there are 1, 2, or 3 Betti elements.  Throughout this subsection, we use the notation from~\cite[Chapter~9]{numerical}, and refer the reader there for a thorough overview of this dichotomy. 

\begin{prop}\label{p:3gen3betti}
If $S = \<n_1,n_2,n_3\>$ has $3$ Betti elements, then $S$ is bland.
\end{prop}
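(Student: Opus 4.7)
The plan is to exhibit a Betti element $b$ whose delta set is exactly $\{\max\Delta(S)\}$, since by the ``in particular'' clause of the theorem recalled at the end of Section~\ref{sec:background}, this is sufficient to conclude that $S$ is bland.

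I would begin by invoking the classical structure theorem for 3-generated, 3-Betti numerical semigroups from~\cite[Chapter~9]{numerical}: for each $i \in \{1,2,3\}$, with $\{j,k\} = \{1,2,3\}\setminus\{i\}$, there are positive integers $c_i, r_{ij}, r_{ik}$ such that $c_i n_i = r_{ij} n_j + r_{ik} n_k$, and these three relations form a minimal presentation. The three Betti elements are $b_i = c_i n_i$, and each has exactly the two factorizations $c_i \ee_i$ and $r_{ij} \ee_j + r_{ik} \ee_k$. Writing $D_i = |c_i - r_{ij} - r_{ik}|$, it follows that $\Delta_S(b_i) = \{D_i\}$ whenever $D_i > 0$, so the task reduces to showing $\max\Delta(S) = \max_i D_i$.

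The inequality $\max\Delta(S) \ge \max_i D_i$ is immediate from the Betti element computation above. For the reverse inequality, I would fix $n \in S$ together with consecutive lengths $\ell < \ell'$ in $\mathsf L_S(n)$ and run a gap-crossing argument: by Theorem~\ref{t:allminimalpresentations}, any two factorizations of $n$ can be joined by a chain of elementary moves from the minimal presentation, each of which replaces $c_i \ee_i$ with $r_{ij} \ee_j + r_{ik} \ee_k$ (or the reverse) and therefore changes the factorization length by $\pm D_i$ for some $i$. Choosing a chain from a length-$\ell$ factorization to a length-$\ell'$ one, some single step must traverse the gap, jumping from length $\le \ell$ to length $\ge \ell'$, and that step forces $\ell' - \ell \le D_i \le \max_j D_j$.

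With this in hand, picking $i^*$ attaining $\max_i D_i$ yields $\Delta_S(b_{i^*}) = \{\max\Delta(S)\}$, and the cited theorem closes the argument. The main obstacle I anticipate is pinning down cleanly the shape of a minimal presentation in the 3-Betti 3-generated case, as this is what guarantees that a single elementary move changes length by exactly one of the values $\pm D_i$ and thus blocks any ``accidental'' larger jump in $\Delta(S)$. A minor edge case to track is when some $D_i$ vanish; in the extreme situation where all $D_i = 0$, every element of $S$ has a singleton length set, $\Delta(S) = \varnothing$, and blandness holds vacuously.
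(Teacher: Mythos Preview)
Your proof is correct and follows the same line as the paper's: both hinge on the observation that in the 3-Betti case each Betti element has exactly two factorizations, hence a singleton (or empty) delta set, which forces blandness. The only difference is that the paper's argument stops there, tacitly invoking the known fact that $\max\Delta(S)$ is realized at a Betti element (cf.\ \cite[Theorem~2.5]{deltabetti}), whereas you reprove this via your chain/gap-crossing argument, making your version more self-contained but otherwise identical in spirit.
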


\begin{proof}
Each $b \in \Betti(S)$ has exactly 2 factorizations, which, after relabeling the generators accordingly, have the form
$$b = c_1n_1 = r_2n_2 + r_3n_3.$$
As such, either $\Delta(b) = \emptyset$ or $\Delta(b) = \{\delta\}$, where $\delta = |r_2 + r_3 - c_1|$.  Thus, $S$ is bland.  
\end{proof}

\begin{prop}\label{p:3gen2betti}
If $S = \<n_1,n_2,n_3\>$ with $\Betti(S) = \{b_1, b_2\}$ and $b_1 < b_2$, then $S$ is tasty if and only if $\max\Delta(b_2) > \max\Delta(b_1)$ and $b_2 - b_1 \in S$.
\end{prop}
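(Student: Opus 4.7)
The plan is to reduce to analyzing $\LD_S(b_1)$ and $\LD_S(b_2)$ via the forward-referenced Theorem~\ref{t:3genbettidensity}, which gives $\LD(S) = \min\{\LD_S(b) : b \in \Betti(S),\ |\mathsf L_S(b)| \ge 2\}$.  The last theorem of Section~\ref{sec:background} characterizes blandness as the existence of some $b \in \Betti(S)$ with $\Delta_S(b) = \{\max\Delta(S)\}$, equivalently $\mathsf L_S(b)$ is an arithmetic progression with common difference $\max\Delta(S)$.  As a preliminary, I would verify, using the classification of 3-generator numerical semigroups in~\cite[Chapter~9]{numerical}, that $\max\Delta(S) = \max(\max\Delta_S(b_1), \max\Delta_S(b_2))$ and that in the 2-Betti case $S$ is a complete intersection with minimal presentation $\{\rho_1, \rho_2\}$, where $\rho_i = (u_{i,1}, u_{i,2})$ is a single relation between factorizations in the two components of $\nabla_{b_i}$.

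For the backward direction, assume $\max\Delta_S(b_2) > \max\Delta_S(b_1)$ and $b_2 - b_1 \in S$.  The inequality $\LD_S(b_1) \ge 1/\max\Delta_S(b_1) > 1/\max\Delta_S(b_2) = 1/\max\Delta(S)$ immediately rules out $b_1$ as a witness of blandness.  For $b_2$, fix a factorization $w \in \mathsf Z_S(b_2 - b_1)$ and two factorizations $u, u' \in \mathsf Z_S(b_1)$ in distinct components of $\nabla_{b_1}$; then $u + w$ and $u' + w$ are factorizations of $b_2$ whose length difference is at most $\max\Delta_S(b_1) < \max\Delta(S)$.  This obstructs $\mathsf L_S(b_2)$ from being an arithmetic progression with common difference $\max\Delta(S)$, so $\Delta_S(b_2) \ne \{\max\Delta(S)\}$ and $\LD_S(b_2) > 1/\max\Delta(S)$, yielding tastiness.

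For the forward direction I argue contrapositively.  If $\max\Delta_S(b_2) \le \max\Delta_S(b_1)$, then $\max\Delta(S) = \max\Delta_S(b_1)$ and the 3-generator 2-Betti structure forces $\mathsf L_S(b_1)$ to be an arithmetic progression with common difference $\max\Delta_S(b_1)$ (most transparently when $b_1$ is the doubled Betti element $c_i n_i = c_j n_j$, whose factorization set is just the two pure factorizations), so $\LD_S(b_1) = 1/\max\Delta(S)$ and $S$ is bland.  If instead $b_2 - b_1 \notin S$, then no factorization $z \in \mathsf Z_S(b_2)$ satisfies $z \ge u_{1,j}$ coordinate-wise for either $j$ (else $z - u_{1,j}$ would be a non-negative factorization of $b_2 - b_1$), so $\rho_1$ cannot be applied to any factorization of $b_2$.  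Theorem~\ref{t:allminimalpresentations} then implies that the only relation connecting factorizations of $b_2$ is $\rho_2$, and $\rho_2$ applied to $z \in \mathsf Z_S(b_2)$ forces $z = u_{2,j}$ (since $z - u_{2,j}$ would be a non-negative factorization of $0$); hence $\mathsf Z_S(b_2) = \{u_{2,1}, u_{2,2}\}$.  Therefore $|\mathsf L_S(b_2)| \le 2$ with unique gap $\max\Delta_S(b_2) = \max\Delta(S)$, giving $\LD_S(b_2) = 1/\max\Delta(S)$ and $S$ bland.

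The hardest step will be the rigidity claim in the forward direction: that $b_2 - b_1 \notin S$ restricts $\mathsf Z_S(b_2)$ to just the two factorizations in $\rho_2$.  The sketched argument via Theorem~\ref{t:allminimalpresentations} requires careful bookkeeping across sub-cases in the Rosales--Garcia-Sanchez classification (depending on which of $b_1, b_2$ is the doubled versus single Betti element), and one must also settle the preliminary identity $\max\Delta(S) = \max(\max\Delta_S(b_1), \max\Delta_S(b_2))$ along with edge-case conventions when $|\mathsf L_S(b_i)| = 1$ so that $\Delta_S(b_i) = \emptyset$.
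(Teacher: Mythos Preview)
Your approach matches the paper's: both directions rest on (i) $b_1$ having exactly two factorizations, so $|\Delta_S(b_1)| \le 1$, and (ii) transporting the trade at $b_1$ to $b_2$ when $b_2 - b_1 \in S$ to force $|\Delta_S(b_2)| \ge 2$.  Your treatment of the case $b_2 - b_1 \notin S$ is more explicit than the paper's (which simply asserts that both Betti elements then have singleton delta sets), but the conclusion and underlying reason are the same.

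The one structural issue is your forward reference to Theorem~\ref{t:3genbettidensity}.  That theorem's proof explicitly invokes Proposition~\ref{p:3gen2betti} to reduce to the tasty 2-Betti case, so citing it here is circular.  Fortunately your argument does not actually need it: the last theorem of Section~\ref{sec:background} already gives that $S$ is bland if and only if some $b \in \Betti(S)$ has $\LD_S(b) = 1/\max\Delta(S)$, which is precisely the reduction to $b_1$ and $b_2$ that you use in both directions.  Simply drop the appeal to Theorem~\ref{t:3genbettidensity} and rely on that characterization instead; the rest of the proof then stands as written.
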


\begin{proof}
For the backward direction, suppose $\max\Delta(b_2) > \max\Delta(b_1)$ and $b_2 - b_1 \in S$.  We must have $\max\Delta(S) = \max\Delta(b_2)$ by \cite[Theorem~2.5]{deltabetti}, but since $b_2 - b_1 \in S$, the trade defined at $b_1$ can be used at $b_2$ to obtain an element of $\Delta(b_2)$ at most $\max\Delta(b_1)$.  As such, $|\Delta(b_2)| \ge 2$, so $S$ is tasty.

Conversely, if $\max\Delta(b_2) \le \max\Delta(b_1)$, then $S$ is bland since $|\Delta(b_1)| = 1$, and if $b_2 - b_1 \notin S$, then $b_1$ and $b_2$ both have singleton delta sets, so again $S$ is bland.  This completes the proof.  
\end{proof}

\begin{thm}\label{t:3genbettidensity}
If $S = \<n_1, n_2, n_3\>$, then $\LD(S)$ occurs at a Betti element.  
\end{thm}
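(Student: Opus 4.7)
The plan is to split by $|\Betti(S)|$, which for a 3-generated numerical semigroup must be 1, 2, or 3.  When $|\Betti(S)| = 1$, $S$ is supersymmetric and Theorem~\ref{t:supersymmetric} directly gives $\LD(S) = \LD_S(s)$ at the unique Betti element.  When $|\Betti(S)| = 3$, Proposition~\ref{p:3gen3betti} shows $S$ is bland, so the final theorem of Section~\ref{sec:background} yields $\LD(S) = \LD_S(b)$ for some $b \in \Betti(S)$.  In either subcase of $|\Betti(S)| = 2$ where $S$ happens to be bland, the same background theorem again locates $\LD(S)$ at a Betti element.

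The substantive case is $|\Betti(S)| = 2$ with $S$ tasty.  Write $\Betti(S) = \{b_1, b_2\}$ with $b_1 < b_2$.  Proposition~\ref{p:3gen2betti} gives $b_2 - b_1 \in S$ and $\max\Delta(b_2) > \max\Delta(b_1)$, so in particular $\max\Delta(S) = \max\Delta(b_2)$.  My aim is to show that $\LD(S) = \LD_S(b_2)$ by establishing $\LD_S(n) \ge \LD_S(b_2)$ for every $n \in S$ with $|\mathsf L_S(n)| \ge 2$.

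To this end, I would invoke Theorem~\ref{t:allminimalpresentations}: since the minimal presentation of $S$ has exactly two relations, one at each $b_i$, any two factorizations of $n$ are joined by a chain of these two trades.  Letting $\delta_i$ denote the length change of the trade at $b_i$ (so $\delta_1 < \delta_2$), the length set $\mathsf L_S(n)$ is determined by which combinations of the two trades are admissible at a fixed factorization of $n$.  The extremal case should be $n = b_2$ itself, where the $b_2$-trade is applicable only once and therefore produces the largest relative gap in $\mathsf L_S(b_2)$; as $n$ grows, more copies of the $b_2$-trade can be interleaved with the more abundant $b_1$-trades, only filling in the length set and hence only increasing $\LD_S(n)$.

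The main obstacle will be turning this heuristic into a rigorous monotonicity inequality.  I anticipate a case split on whether $n - b_2 \in S$: if so, an induction on the number of $b_2$-trades admissible at $n$ should reduce to the base case $n = b_2$, since factorizations of $n$ that use at least one $b_2$-trade decompose through $b_2$; if not, $n$ admits at most one application of the $b_2$-trade, and the length set of $n$ can be compared directly to $\mathsf L_S(b_2)$ via the $b_1$-trades common to both, using the fact that the maximum gap in $\mathsf L_S(n)$ has size exactly $\delta_2 = \max\Delta(S)$.  I expect this last step, controlling $|\mathsf L_S(n)|$ versus $\max\mathsf L_S(n) - \min\mathsf L_S(n)$ uniformly across both branches of the case split, to be the genuinely delicate part of the argument.
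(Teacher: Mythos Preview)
Your reduction to the tasty two-Betti case and the case split on whether $n - b_2 \in S$ match the paper exactly.  But your handling of the two subcases is inverted.

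When $n - b_2 \notin S$, the trade at $b_2$ is \emph{completely unavailable}: applying it in either direction would exhibit $n - b_2$ as an element of $S$.  Hence $\mathsf L_S(n)$ is an arithmetic progression with common difference $\delta_1$, giving $\LD_S(n) = 1/\delta_1 = \LD_S(b_1) \ge \LD_S(b_2)$ (the last inequality holds because $\max\Delta_S(b_2) > \delta_1$ forces $\LD_S(b_2) < 1/\delta_1$).  There is no $\delta_2$-gap here; this is the trivial branch, not the delicate one.

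All of the work lies in the case $n - b_2 \in S$, and your proposed induction on ``the number of $b_2$-trades admissible'' is too coarse to carry it: $\mathsf L_S(n)$ does not decompose as $\mathsf L_S(b_2) + \mathsf L_S(n - b_2)$, so the inductive step does not follow from the hypothesis in any obvious way.  The paper instead exploits a structural fact you have not invoked: in the two-Betti case one may relabel so that $b_1 = c_1 n_1 = c_2 n_2$ and $b_2 = c_3 n_3 = r_1 n_1 + r_2 n_2$, making $t_2 = (r_1, r_2, -c_3)$ the \emph{only} trade that changes the $n_3$-coordinate.  Fixing $z \in \mathsf Z_S(n)$ with $z_3$ maximal and writing $z_3 = qc_3 + r$, the third coordinates occurring in $\mathsf Z_S(n)$ are exactly $\{r, r+c_3, \ldots, r+qc_3\}$.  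Repeatedly applying $t_2$ to $z$ produces a subset $L \subseteq \mathsf L_S(n)$ that is a chain of $q$ endpoint-matching translates of $\mathsf L_S(b_2)$, so $\LD(L) = \LD_S(b_2)$.  Every remaining length in $\mathsf L_S(n) \setminus L$ is obtained by applying $t_1$ past the ends of $L$; each such length adds $1$ to the numerator and $\delta_1$ to the denominator of the density, and since $\LD_S(b_2) \le 1/\delta_1$ this can only increase $\LD$.  That is the ``delicate part'' you anticipated, and it is resolved not by induction but by this direct parametrization through the $n_3$-coordinate.
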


\begin{proof}
If $S$ has 1 Betti element, then $S$ is supersymmetric, so apply Theorem~\ref{t:supersymmetric}.  The~claim clearly holds if $S$ is bland, so by Propositions~\ref{p:3gen3betti} and~\ref{p:3gen2betti} it suffices to assume $\Betti(S) = \{b_1, b_2\}$, $b_2 - b_1 \in S$, and $\max\Delta(b_2) > \max\Delta(b_1)$.  Write $S = \<n_1, n_2, n_3\>$,
$$b_1 = c_1n_1 = c_2n_2
\qquad \text{and} \qquad
b_2 = c_3n_3 = r_1n_1 + r_2n_2,$$
and let
$$\delta_1 = c_1 - c_2
\qquad \text{and} \qquad 
\delta_2 = r_1 + r_2 - c_3.$$
Notice $t_1 = (c_1, -c_2, 0)$ and $t_2 = (r_1, r_2, -c_3)$ form a minimal presentation for $S$.  Assume $c_1 > c_2$, and the two given factorizations of $b_2$ have extremal lengths in $\mathsf L(b_2)$ (or, equivalently, that $|\delta_2|$ is maximal among choices for the trade $t_2$).  

Fix $n \in S$ not uniquely factorable.  If $n - b_2 \notin S$, then only the trade $t_1$ is available, so $\Delta(n) = \{\delta_1\}$ and thus $\LD(n) = \LD(b_1)$.  As such, suppose $n - b_2 \in S$.  Fix a factorization $z = (z_1, z_2, z_3) \in \mathsf Z(n)$ with $z_3$ maximal, and write $z_3 = qc_3 + r$ with $q, r \in \ZZ_{\ge 0}$ and $r < c_3$.  Since $t_2$ is the only trade involving $n_3$, we must have
$$\{y_3 : (y_1, y_2, y_3) \in \mathsf Z(n)\} = \{r, r + c_3, \ldots, r + qc_3\}.$$
Moreover, let $\ell = z_1 + z_2 + z_3 - c_3$, and let
$$L = \mathsf L(b_2) + \{\ell, \ell + \delta_2, \ldots, \ell + (q-1)\delta_2\}.$$
By repeatedly performing the trade $t_2$ to $z$, we see $L \subseteq \mathsf L(n)$, and by the maximality of $|\delta_2|$, $L$ is a union of translations of $\mathsf L(b_2)$ with matching endpoints.  In particular, 
$$\LD(L) = \LD(b_2) > \LD(b_1) = \tfrac{1}{\delta_1}.$$
The key obsevation is now that by the maximality of $|\delta_2|$, every length in $\mathsf L(n) \setminus L$ is attained by a factorization obtained from performing the trade $t$, perhaps multiple times, to some factorization whose length lies in $L$.  As such, letting $m = |\mathsf L(n) \setminus L|$, we obtain
$$\LD(n) \ge \frac{|L| + m}{\max L - \min L + m\delta_1} \ge \frac{|L|}{\max L - \min L} = \LD(b_2),$$
as desired.  
\end{proof}

\begin{example}\label{e:notacceptedatbetti}
The conclusion of Theorem~\ref{t:3genbettidensity} can fail if $S$ has 4 or more generators.  Indeed, the semigroup $S = \<20, 28, 42, 73\>$, which appeared as \cite[Example~3.3]{lengthdensity}, has Betti element length sets $\mathsf L(84) = \{2, 3\}$, $\mathsf L(140) = \{4, 5, 7\}$, and $\mathsf L(146) = \{2, 4, 5\}$, but a strictly smaller length density results from $\mathsf L(202) = \{4, 6, 7, 9\}$.  
\end{example}


\subsection{Maximal embedding dimension numerical semigroups}

We say $S$ is \emph{maximal embedding dimension} (or \emph{MED}) if $e(S) = m(S)$.  In this subsection, we write 
$$S = \<m, n_1, \ldots, n_{m-1}\>$$
and assume $n_i \equiv i \bmod m$ for each $i$.  It is known in this case that 
$$\Betti(S) = \{n_i + n_j : 1 \le i \le j \le m-1\};$$
we refer the reader to \cite[Section~7.4]{numerical}.  

We begin by identifying a class of Betti elements whose delta sets are singletons.  

\begin{prop}\label{p:medsmallbetti}
Suppose $b \in \Betti(S)$ is the smallest Betti element in the equivalence class of $k \in [0, m-1]$ modulo $m$.  If $k = 0$ or $k$ is a unit in $\ZZ_m$, then $|\mathsf L(b)| \le 2$.  
\end{prop}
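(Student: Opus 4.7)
The plan is to analyze factorizations $z = (z_0, z_1, \ldots, z_{m-1})$ of $b$ by the quantity $s = \sum_{i \ge 1} z_i$, showing each of the cases $s = 0, 1, 2$ produces a single specific length and ruling out $s \ge 3$ via a trade-based argument. The two lengths to emerge from short factorizations will be $2$ together with either $b/m$ (when $k = 0$) or $1 + (b - n_k)/m$ (when $k$ is a unit), giving $|\mathsf L(b)| \le 2$.

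First I would handle $s \le 2$ directly. The case $s = 0$ forces $k = 0$ and the unique factorization $(b/m, 0, \ldots, 0)$ of length $b/m$. The case $s = 1$ forces $k$ to be a unit, with $z_k = 1$ and $z_0 = (b - n_k)/m$, giving length $(b - n_k)/m + 1$. When $s = 2$, the support is a pair $\{p, q\}$ with $p + q \equiv k \pmod m$, and $n_p + n_q \le b$ is itself a Betti element in class $k$, since in MED every $n_i + n_j$ is a Betti element. Minimality of $b$ then forces $n_p + n_q = b$, yielding length $2$.

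Next I would rule out $s \ge 3$ by iteratively applying the trade $n_a + n_{a'} \mapsto n_c + \mu m$, where $c = (a + a') \bmod m$ and $\mu = (n_a + n_{a'} - n_c)/m \ge 1$, on a pair $(a, a')$ in the current support satisfying $c \ne 0$. Each such trade reduces the support's size by one, preserves its sum modulo $m$, and changes the total factorization length by $\mu - 1 \ge 0$, so the length is non-decreasing. Iterating while possible, either the support reaches size two---in which case the remaining pair $\{p, q\}$ still satisfies $p + q \equiv k$ by residue invariance, so as in the $s = 2$ analysis minimality of $b$ forces the end length to be $2$, contradicting that trades do not decrease length from the original $|z| \ge 3$---or we get stuck with a support of size at least three in which every pair sums to $0 \pmod m$.

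The main obstacle is this stuck scenario. An elementary check shows it forces the support to consist entirely of copies of $m/2$, which requires $m$ to be even. For $k = 0$, the residue constraint makes the support size even and hence at least $4$, yielding $b \ge 4 n_{m/2}$; but $2 n_{m/2}$ is a Betti element in class $0$, so $b \le 2 n_{m/2}$, a contradiction. For $k$ a unit with $m \ge 3$, the residue condition $t \cdot m/2 \equiv k \pmod m$ (where $t$ is the support size) forces $k \in \{0, m/2\}$, neither of which is a unit when $m \ge 3$; the remaining case $m = 2$ is vacuous since class $1$ contains no Betti elements.
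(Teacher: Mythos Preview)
Your proof is correct. The cases $s \le 2$ are handled cleanly, and for $s \ge 3$ your trade argument works: the trade $n_a + n_{a'} \mapsto n_c + \mu m$ is legitimate since $n_c$ being an atom forces $n_a + n_{a'} - n_c \ge m$, so $\mu \ge 1$ and the length is non-decreasing; reaching $s=2$ then contradicts $|z| \ge 3$ via the minimality argument you already gave, and the stuck-at-$m/2$ scenario is correctly dispatched by residue considerations.

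The paper takes a different, one-step route rather than iterating trades. For $k = 0$ it argues directly: if some factorization of $b$ uses $n_i$ (with $i \ge 1$) and has length $\ge 3$, then subtracting $n_i$ leaves a length-$\ge 2$ factorization of $b - n_i$, and since minimality of $b$ gives $n_{m-i} = (b - n_i) + cm$ for some $c \ge 0$, this produces a non-trivial factorization of the atom $n_{m-i}$, a contradiction. For $k$ a unit the paper first uses the same idea to force any factorization involving $n_i$ with $i \ne k$ to equal $n_i + n_{k-i}$, and then invokes the unit hypothesis in a different way than you do: a factorization using only $m$ and $n_k$ must have $z_k \equiv 1 \pmod m$, and $z_k \ge m+1$ is ruled out by the size estimate $(m+1)n_k > b$. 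So the paper's proof is shorter and avoids the stuck-case analysis entirely, while your approach is more uniform across the two cases and uses the unit hypothesis only to exclude the $m/2$ obstruction.
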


\begin{proof}
First, suppose $k = 0$, so that $b$ is the smallest Betti element of $S$ divisible by $m$.  
We claim any factorization of $b$ involving $n_i$ with $1 \le i \le m-1$ must have length~2 (and thus must be the factorization $n_i + n_{m-i}$).  Indeed, by the minimality of $b$, 
$$n_i + n_{m-i} - b = cm$$
for some $c \ge 0$, so any factorization of $b$ of length 3 or more involving~$n_i$ would yield a factorization of $n_{m-i}$ of length at least 2, which is impossible since $n_{m-i}$ is a minimal generator of $S$.  This proves $\mathsf L(b) = \{2, a\}$, where $b = am$.  

In all remaining cases, $b \equiv k \bmod m$ with $1 \le k \le m - 1$.  
Write
$$b = am + n_k = n_i + n_{k-i},$$
where $a \ge 2$ and $1 \le i \le m-2$.  By similar reasoning as above, any factorization involving $n_i$ with $i \ne k$ must be the factorization $n_i + n_{k-i}$ by the minimality of~$b$.   As~such, since $k$ is a unit in $\ZZ_m$, any factorization involving $n_k$ other than $am + n_k$ must contain $(m+1)n_k$, which is impossible since
$$(m + 1)n_k = cn_k + (m+1-c)n_k > n_i + n_{k-i} \ge b$$
for some $c$.  This again yields the desired claim.  
\end{proof}

\begin{thm}\label{t:medprime}
If $m \ge 3$ is prime, then $S$ is bland.  
\end{thm}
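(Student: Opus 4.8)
The plan is to show that every Betti element of $S$ that could possibly witness $\max\Delta(S)$ has a singleton delta set, so that no element of $S$ can defeat the lower bound $1/\max\Delta(S)$; equivalently, $S$ is bland. Since $m$ is prime, every residue class $k \in [1, m-1]$ consists of units in $\ZZ_m$, so Proposition~\ref{p:medsmallbetti} already disposes of the \emph{smallest} Betti element in each nonzero residue class, as well as the smallest Betti element divisible by $m$: each of these has $|\mathsf L(b)| \le 2$, hence $|\Delta(b)| \le 1$. The remaining Betti elements are the non-minimal ones in each residue class, and these are where the work lies.

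The key structural fact I would invoke is the description $\Betti(S) = \{n_i + n_j : 1 \le i \le j \le m-1\}$ together with Theorem~\ref{t:allminimalpresentations}: a minimal presentation of a MED semigroup is obtained by choosing, for each Betti element $b$, enough relations among its factorizations to connect $\nabla_b$. First I would argue that for any Betti element $b = n_i + n_j$ that is \emph{not} minimal in its residue class, we can write $b = (n_i + n_j) = b' + cm$ where $b'$ is the minimal Betti element in that class and $c \ge 1$. I would then analyze $\mathsf Z_S(b)$ directly: a factorization of $b$ of length $\ell \ge 3$ must, after removing a copy of $m$ or splitting off the relation $n_i + n_j = b' = \cdots$, produce a factorization of a strictly smaller element; the minimality of $b'$ and the fact that the $n_i$ are minimal generators should force $\mathsf Z_S(b)$ to be very constrained — either two factorizations $n_i + n_j$ and $n_p + n_q$ both of length $2$, giving $\Delta(b) = \emptyset$, or a factorization of length $2$ together with a ``long'' factorization $\alpha m + n_k$, giving $\mathsf L(b) = \{2, a\}$ and $|\Delta(b)| = 1$. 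In all cases $|\Delta(b)| \le 1$. Combining with Proposition~\ref{p:medsmallbetti}, every $b \in \Betti(S)$ satisfies $|\Delta_S(b)| \le 1$.

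To conclude blandness from $|\Delta_S(b)| \le 1$ for all Betti elements, I would use the theorem quoted after Definition~\ref{d:lengthdensity}: $S$ is bland if and only if $\LD_S(b) = 1/\max\Delta(S)$ for some Betti element $b$, which holds in particular when $\Delta_S(b) = \{\max\Delta(S)\}$. Since $\max\Delta(S)$ is realized on the delta set of some element, and by the standard fact (e.g.\ \cite{deltabetti}) that $\max\Delta(S)$ is attained at a Betti element, there is a Betti element $b$ with $\max\Delta(S) \in \Delta_S(b)$; but $|\Delta_S(b)| \le 1$ forces $\Delta_S(b) = \{\max\Delta(S)\}$, so $S$ is bland. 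I would need to double-check that the ``$\max\Delta(S)$ is attained at a Betti element'' statement is available — if not directly in the excerpt, it follows from \cite[Theorem~2.5]{deltabetti} as already cited in the proof of Proposition~\ref{p:3gen2betti}.

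The main obstacle I anticipate is the case analysis for non-minimal Betti elements: showing that a Betti element $b = n_i + n_j$ with a representation $b' + cm$ cannot have three or more distinct factorization lengths. The delicate point is ruling out a third factorization length — one has to use simultaneously the minimality of $b'$ in its residue class, the fact that each $n_\ell$ is a minimal generator, and (when the residue is a unit) the unit argument from Proposition~\ref{p:medsmallbetti} that forbids factorizations containing $(m+1)n_k$. Making this uniform across the residue-zero case and the unit cases, and handling the possibility $i = j$ cleanly, is where the care is needed; I expect the argument parallels the proof of Proposition~\ref{p:medsmallbetti} quite closely but applied to an arbitrary (not necessarily minimal) Betti element in the class, with the extra term $cm$ absorbed into the ``divisible by $m$'' part of the factorization.
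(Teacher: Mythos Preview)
Your proposed approach has a genuine gap: it is \emph{not} true that every Betti element of a prime-multiplicity MED semigroup has $|\Delta_S(b)| \le 1$. Take $S = \<5, 16, 17, 18, 19\>$, which is MED with prime multiplicity $m = 5$. The Betti element $b = 2n_4 = 38$ lies in residue class $3$ modulo $5$ and is \emph{not} the minimal Betti element in that class (that is $n_1 + n_2 = 33$). One computes
\[
\mathsf Z_S(38) = \{(0,0,0,0,2),\ (1,1,1,0,0),\ (4,0,0,1,0)\},
\]
so $\mathsf L_S(38) = \{2,3,5\}$ and $\Delta_S(38) = \{1,2\}$. Your dichotomy ``either two length-$2$ factorizations, or one length-$2$ together with one of the form $\alpha m + n_k$'' is therefore incomplete: the factorization $5 + n_1 + n_2$ of length $3$ fits neither pattern and produces a third distinct length. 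The unit argument of Proposition~\ref{p:medsmallbetti} does not extend to non-minimal Betti elements, because once $b$ exceeds the minimal Betti element in its residue class there is nothing preventing intermediate-length factorizations built from a copy of $m$ together with a length-$2$ expression for a smaller Betti element.

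The paper's proof takes a different route. Rather than bound $|\Delta_S(b)|$ for all Betti elements, it singles out the smallest Betti element divisible by $m$, say $am$, which by Proposition~\ref{p:medsmallbetti} has $\Delta_S(am) = \{a-2\}$, and then shows by an inductive comparison---ordering the Betti elements within each residue class and bounding the increment $c$ when passing from one to the next by $a-2$---that every other Betti element $b'$ satisfies $\max\Delta_S(b') \le a-2$. This gives $\max\Delta(S) = a-2$, realized at a Betti element with singleton delta set, so $S$ is bland. In the example above, $am = 35$ with $\mathsf L_S(35) = \{2,7\}$ and $\Delta_S(35) = \{5\}$; the non-singleton $\Delta_S(38) = \{1,2\}$ is harmless because its maximum, $2$, lies below $a-2 = 5$.
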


\begin{proof}
Fix $a \ge 1$ so that $am$ is the smallest Betti element that is a multiple of $m$.  
By~relabeling the generators of $S$ using an appropriately chosen automorphism of $\ZZ_m$, it suffices to assume $am = n_1 + n_{m-1}$.  
In what follows, we show
$$\max\Delta(S) = a - 2,$$
which completes the proof since $\Delta(am) = \{a - 2\}$ by Proposition~\ref{p:medsmallbetti}.  

First, fix $b' = n_j + n_{m-j} \in \Betti(S)$, and suppose $b' > am$.  
Let 
$$b = n_i + n_{m-i} \in \Betti(S)$$
in such a way that $i$ is maximal such that $i < j$ and $b < b'$ (note $i=1$ satisfies both constraints, so the set of eligible $i$ is nonempty).  By induction on $b'$, we can assume $\max\Delta(b) \le a - 2$.  Fix $c, c' \ge 1$ such that 
$$
b' = cm + n_i + n_{m-i}
\qquad \text{and} \qquad
n_{i+1} + n_{m-i-1} = c'm + n_i + n_{m-i}.
$$
Since $n_{i+1}$ is a minimal generator of $S$, $n_i + n_1 > n_{i+1}$ and thus $n_i + n_1 \ge n_{i+1} + m$.  Similarly, $n_{m-i} + n_{m-1} \ge n_{m-i-1} + m$, and we obtain
$$c'm = n_{i+1} + n_{m-i-1} - (n_i + n_{m-i}) \le n_1 + n_{m-1} - 2m = m(a - 2),$$
so by maximality of $i$, we have $c \le c' \le a - 2$.  As such, from
$$\{2\} \cup (c + 2 + \mathsf L(b)) \subseteq \mathsf L(b'),$$
we conclude $\max\Delta(b') \le a - 2$.  

Next, fix $k \in [1, m-1]$.   Letting 
$b = n_1 + n_{k-1} = cm + n_k,$
for some $c$, we see 
$$cm = n_1 + n_{k-1} - n_k \le n_1 + n_{m-1} - m = m(a - 1).$$
This means $\mathsf L(b) \subset [2, c+1] \subset [2, a]$, so $\max\Delta(b) \le a - 2$.  More generally, let $b' = n_j + n_{k-j}$ with $1 \le j \le k-j \le k$, and let
$b = n_i + n_{j-i}$ with $i$ maximal subject to $i < j$ and $b < b'$.  By inductionon $b'$, we can assume $\max\Delta(b) \le a - 2$.  Fix $c, c'$ so that
$$
b' = cm + n_i + n_{k-i}
\qquad \text{and} \qquad
n_{i+1} + n_{k-i-1} = c'm + n_i + n_{k-i}.  
$$
We obtain
$$c'm = n_{i+1} + n_{k-i-1} - (n_i + n_{k-i}) \le n_1 + n_{m-1} - 2m = m(a - 2),$$
so by the maximality of $i$, we have $c \le c' \le a - 2$.  As before, we have
$$\{2\} \cup (c + 2 + \mathsf L(b)) \subseteq \mathsf L(b'),$$
and can conclude $\max\Delta(b') \le a - 2$.  A similar argument for $b' = n_{k+j} + n_{m-j}$ with $k + 1 \le k + j \le m - j \le m - 1$ completes the proof.  
\end{proof}

\begin{prop}\label{p:medcomposite}
There exist both bland and tasty MED numerical semigroups of each composite multiplicity.  
\end{prop}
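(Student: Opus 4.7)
The plan is to exhibit for each composite $m$ one bland and one tasty MED numerical semigroup of multiplicity $m$.

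For the bland side, I would take $S_0 = \<m, m+1, m+2, \ldots, 2m-1\>$, which is MED of multiplicity $m$ for every $m \ge 2$.  Since its generators form an arithmetic progression of common difference $1$, a routine swap argument shows every length set of $S_0$ is a full interval, so $\Delta(S_0) = \{1\}$ and $\LD(S_0) = 1 = 1/\max\Delta(S_0)$, making $S_0$ bland.

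For the tasty side, the plan is to perturb $S_0$ by replacing certain generators with carefully chosen ``sparse'' alternatives, so that some Betti element has length set $\{2\} \cup [\alpha, \beta]$ with $\alpha > 3$ and thus delta set $\{1, \alpha-2\}$ of size two.  The prototype is the case $m = 4$:\ the MED semigroup $S = \<4, 6, 13, 15\>$ (with $13$ and $15$ minimal since $\<4, 6\>$ contains only even integers) has Betti element length sets $\mathsf L(12) = \{2,3\}$, $\mathsf L(19) = \{2\}$, $\mathsf L(21) = \{2,3\}$, $\mathsf L(26) = \{2,5,6\}$, $\mathsf L(28) = \{2,5,6,7\}$, and $\mathsf L(30) = \{2,3,5,6,7\}$, giving $\max\Delta(S) = 3$ while no Betti element has singleton delta set $\{3\}$.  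By the theorem at the end of Section~\ref{sec:background}, $S$ is tasty.  For general composite $m$, I would construct analogous examples by selecting a small sub-collection of generators whose greatest common divisor exceeds $1$ (to restrict the background length sets) together with larger generators placed in the gaps of the small ones (to produce sparse Betti length sets).

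The main obstacle is designing this construction uniformly across all composite $m$.  Specifically, one must simultaneously verify (i) MED, i.e.\ that each large generator remains minimal in the presence of the smaller ones (a gap-enumeration argument); (ii) some Betti element $b$ congruent to $0 \pmod m$ has length set $\{2\} \cup [\alpha, \beta]$ producing $\Delta_S(b) = \{1, \alpha-2\}$; and (iii) no other Betti element has singleton delta set equal to $\{\max\Delta(S)\}$.  Verifying (iii)---ruling out that sums of mixed small and large generators inadvertently acquire a singleton delta set matching $\max\Delta(S)$---is the primary technical hurdle, and will likely require case analysis depending on the arithmetic structure of $m$ (for instance, distinguishing $m$ even from $m$ odd composite, and tuning the placement of the large generators accordingly to block every potential singleton delta).
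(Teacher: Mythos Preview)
Your bland construction is identical to the paper's.  For the tasty construction, you have the right strategy---a subfamily of ``small'' generators sharing a common prime factor, with the remaining residue classes filled by ``large'' generators---and your $m=4$ example is correct.  What is missing is the uniform construction, which you explicitly flag as unresolved.

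The paper closes exactly this gap.  Writing $m = pq$ with $p$ prime and $q \ge 2$, it sets
\[
n_i = \begin{cases} m + i & \text{if } p \mid i, \\ 2qm + i & \text{if } p \nmid i. \end{cases}
\]
The key device you are missing is the single trade $q n_p = (q+1)m$ among the small generators, which has length difference~$1$.  For any Betti element $b = n_i + n_j$ with $p \nmid i$ and $p \nmid j$, one has $b = n_{i+j} + cm$ with $c \ge 2q$, so this trade applies at least once and forces $1 \in \Delta_S(b)$.  When $p \mid i$ (or $p \mid j$), a direct check gives $\mathsf L(b) \subseteq \{2,3\}$.  Thus every Betti element has either empty delta set or $1$ in its delta set, handling your obstacle~(iii) in one stroke.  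Finally, $n_1 + n_{m-1}$ has no length-$3$ factorization (any such factorization would use only small generators, but $3n_{m-p} < n_1 + n_{m-1}$), so $\max\Delta(S) > 1$, and tastiness follows.

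In short, your plan is on target, but the paper's choice of $2qm + i$ for the large generators and the observation that the trade $qn_p = (q+1)m$ is available at every ``large'' Betti element is the concrete mechanism that eliminates the case analysis you anticipate.
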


\begin{proof}
The numerical semigroup $S = \<m, m + 1, \ldots, 2m - 1\>$ is bland since it is arithmetical and thus has singleton delta set by \cite[Theorem~2.2]{setoflengthsets}.  

Now, if $m = pq$ with $p$~prime and $q \ge 2$, then we claim $S = \<m, n_1, \ldots, n_{m-1}\>$ with
$$n_i = \begin{cases}
m + i & \text{if } p \mid i; \\
2qm + i & \text{if } p \nmid i,
\end{cases}$$
is tasty.  Indeed, consider a Betti element $b = n_i + n_j$.  For convenience in what follows, we write $n_{i+j} = n_{i+j-m}$ if $i + j > m$.  If $p \mid i$, then $p \mid j$ if and only if $p \mid (i + j)$, so 
$$n_i + n_j = \begin{cases}
m + n_{i+j} & \text{if } i + j < m; \\
2m + n_{i+j-m} & \text{if } i + j > m,
\end{cases}$$
and thus $\mathsf L(b) \subseteq \{2,3\}$.  Alternatively, if $p \nmid i$ and $p \nmid j$, then $b = n_{i+j} + cm$ with $c \ge 2q$, so the trade $qn_p = (q+1)m$ can be performed at least once, meaning $\min\Delta(b) = 1$.  

The above argument implies each $b \in \Betti(S)$ has either $\Delta(b) = \emptyset$ or $1 \in \Delta(b)$.  Lastly, the Betti element 
$n_1 + n_{m-1} = (2B+1)m$
has no factorizations of length 3, since any such factorization can use only $m, n_p, \ldots, n_{m-p}$ but $3n_{m-p} < n_1 + n_{m-1}$.  Thus, $\max\Delta(S) > 1$ and occurs at a Betti element with non-singleton delta set.  
\end{proof}

We close this section by examining multiplicity 4 MED numerical semigroups, where geometry plays a role in determining whether each semigroup is bland or tasty.  

Given $n_1, n_2, n_3 > 4$ with $n_i \equiv i \bmod 4$ for each $i$, the semigroup $S = \<4, n_1, n_2, n_3\>$ is MED if and only if 
$$
2n_1 > n_2
\qquad
n_1 + n_2 > n_3,
\qquad
n_2 + n_3 > n_1, 
\qquad \text{and} \qquad
2n_3 > n_2
$$
each hold~\cite{kunz}.  As such, it is natural to representing each MED numerical semigroup $S = \<4, n_1, n_2, n_3\>$ as a point $(n_1,n_2,n_3) \in \RR^3$.  Examining semigroups with fixed coordinate sum $n_1 + n_2 + n_3$ yields a cross section as depicted on the left in Figure~\ref{f:m4plots}.  The two regions labeled ``bland'' coincide with the semigroups where $\min(n_1,n_2,n_3)$ equals $n_1$ and $n_3$, respectively (Proposition~\ref{p:med4firstgen}).  For the remaining semigroups, if $n_1 + n_3$ is sufficiently larger than $n_2$, then $S$ is guaranteed tasty by (Theorem~\ref{t:med4tasty}) and lies in the region labeled ``tasty'' on the left.  This phenomenon is also visible in the plot on the right in Figure~\ref{f:m4plots}, which depicts the cross section $n_2 = 18$, placing a large (red) point at $(n_1, n_3)$ if $S$ is bland and a smaller (black) point if $S$ is tasty.

\begin{figure}
\begin{center}
\includegraphics[height=2.1in]{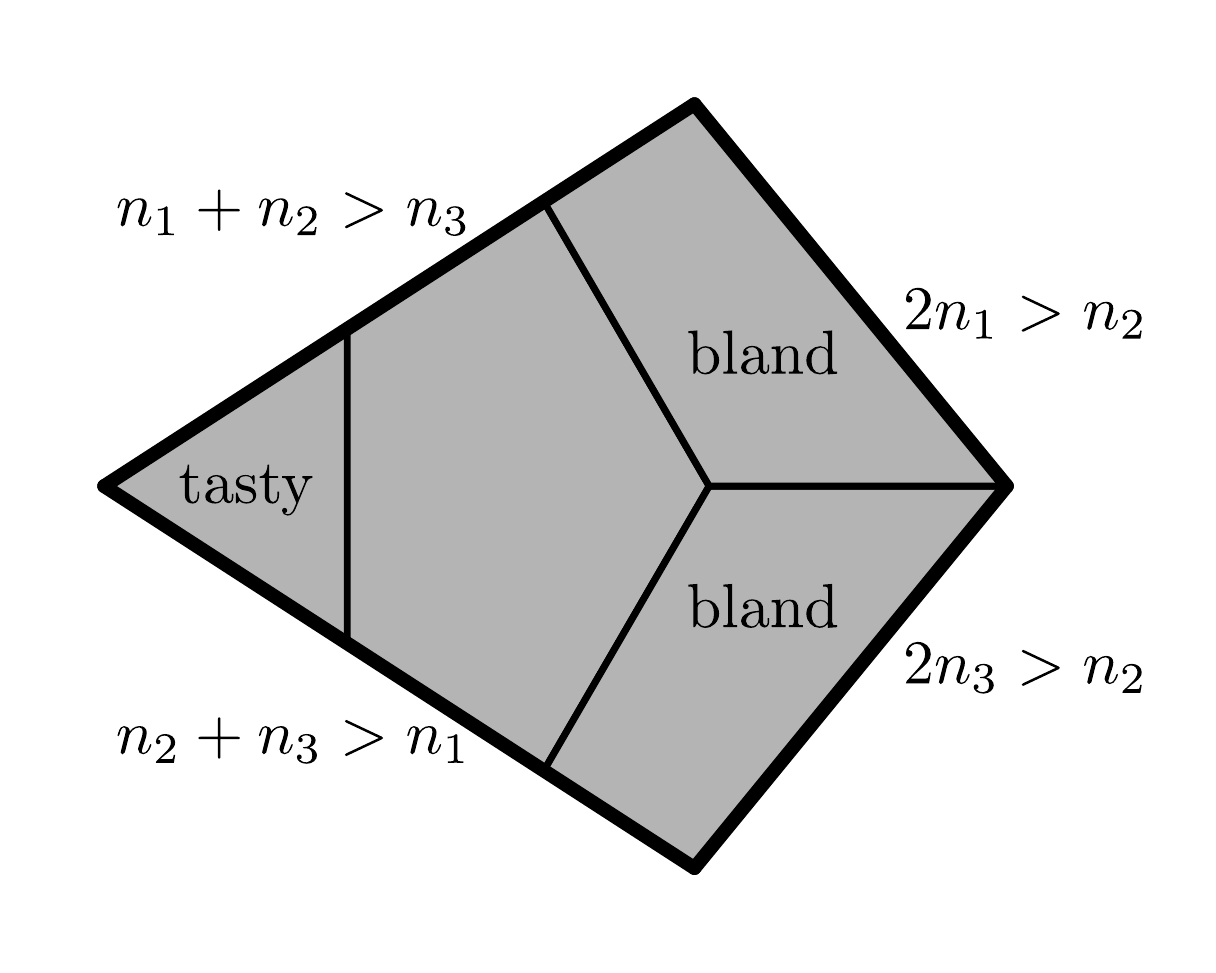}
\hspace{0.1in}
\includegraphics[height=2.1in]{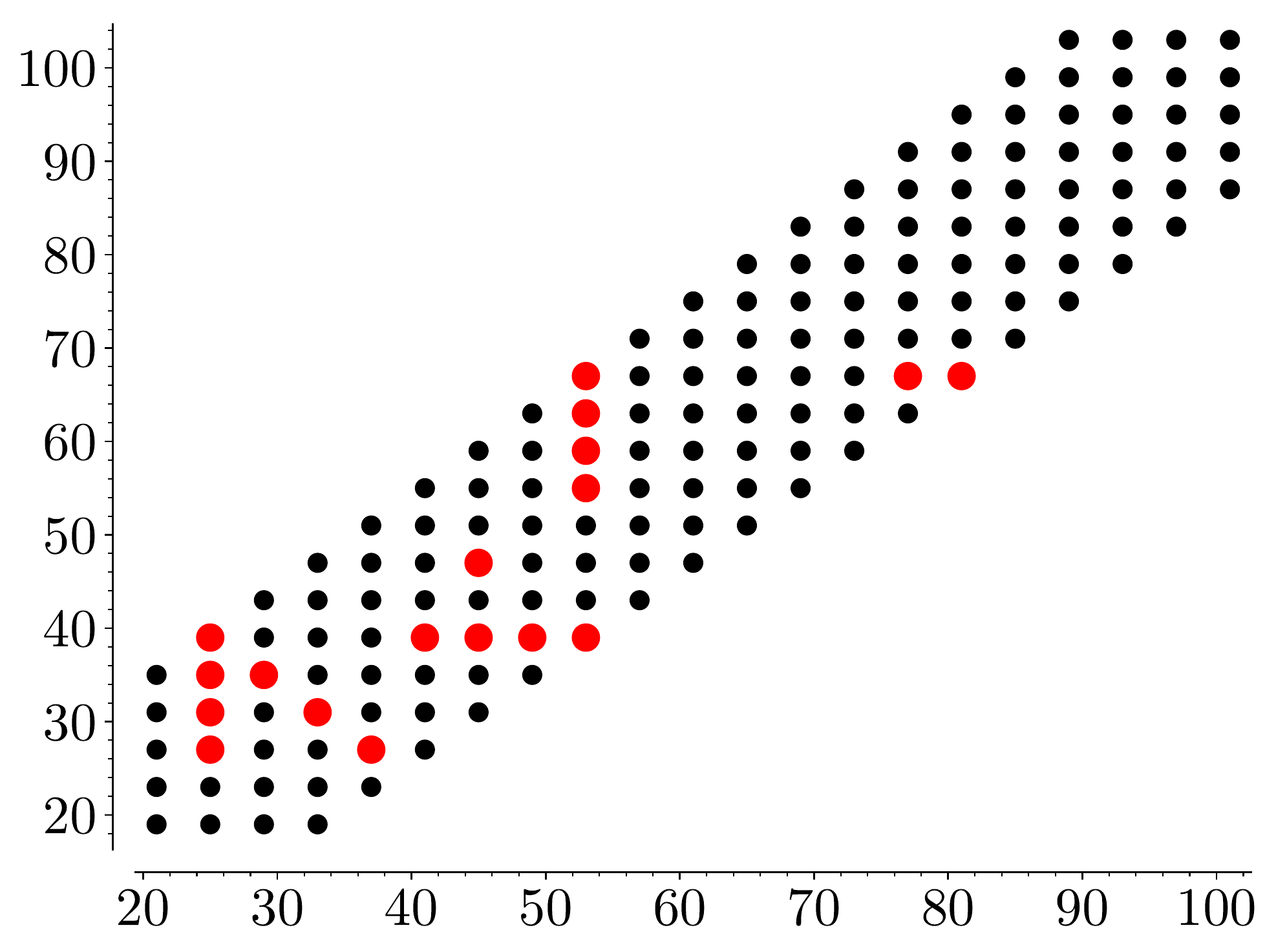}
\end{center}
\caption{A diagram (left) of a cross section of MED numerical semigroups $S = \<4, n_1, n_2, n_3\>$ with $n_1 + n_2 + n_3$ fixed, and a plot (right) obtained by setting $n_2 = 18$ and placing a large (red) point at $(n_1, n_3)$ if~$S$ is bland and a smaller (black) point if $S$ is tasty.}
\label{f:m4plots}
\end{figure}

We note that the geometric viewpoint discussed above, first outlined in \cite{kunz}, has proven fruitful in recent years for studying enumerative questions surrouding numerical semigroups; we direct the interested reader to \cite{wilfmultiplicity,kunzfaces1}.  

\begin{prop}\label{p:med4firstgen}
If $m = 4$ and $\min(n_1, n_2, n_3) \ne n_2$, then $S$ is bland. 
\end{prop}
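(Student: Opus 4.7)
The plan is to exhibit a Betti element $b \in \Betti(S)$ with $\Delta_S(b) = \{\max\Delta(S)\}$, which by the Betti characterization recorded after Definition~\ref{d:lengthdensity} forces $S$ to be bland.  By the automorphism $i \mapsto -i$ of $\ZZ_4$ (which interchanges $n_1$ and $n_3$ and preserves all length-theoretic invariants of $S$), I may assume throughout that $\min(n_1, n_2, n_3) = n_1$, so that $n_1 < n_2$ and $n_1 < n_3$.  I then take $b = \min(n_1 + n_3, \, 2n_2)$, the smallest Betti element of $S$ divisible by $4$.  Applying Proposition~\ref{p:medsmallbetti} with $k = 0$ gives $\mathsf L(b) \subseteq \{2, a\}$ for $a = b/4$, and $b \ge 10$ forces $a \ge 3$, so $\Delta(b) = \{a-2\}$.

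What remains is $\max\Delta(S) \le a - 2$.  By \cite[Theorem~2.5]{deltabetti}, this reduces to showing $\max\Delta(b') \le a - 2$ for each Betti element $b' \in \{2n_1, 2n_2, 2n_3, n_1 + n_2, n_1 + n_3, n_2 + n_3\}$.  For each such $b'$, I would enumerate its factorizations $4\alpha + \beta n_1 + \gamma n_2 + \delta n_3$ using the residue constraint $\beta + 2\gamma + 3\delta \equiv b' \pmod 4$ together with the size bounds coming from MED, then extract the set of lengths and check the gaps.  The MED strict inequalities strengthened via residues mod $4$ (namely $n_2 + 4 \le 2n_1$, $n_2 + 4 \le 2n_3$, $n_3 + 4 \le n_1 + n_2$, $n_1 + 4 \le n_2 + n_3$), together with the hypothesis $n_1 \le n_3$, then suffice to bound every gap by $a - 2$.

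The hard part will be the analysis of $\mathsf L(2n_3)$, which beyond the natural factorizations $2n_3$, $n_2 + \tfrac{2n_3-n_2}{4}\cdot 4$, and $2n_1 + \tfrac{n_3-n_1}{2}\cdot 4$ (the last arising precisely because $n_1 \le n_3$), can contain extras such as $3n_2 + \tfrac{2n_3-3n_2}{4}\cdot 4$ or $3n_1 + n_3 + \tfrac{n_3-3n_1}{4}\cdot 4$ when $n_3$ is large.  The factorization $2n_1 + \tfrac{n_3-n_1}{2}\cdot 4$ is essential: without it the gap between the lengths $2$ and $(2n_3-n_2)/4 + 1$ could exceed $a - 2$, which is precisely why the conclusion fails for semigroups with $\min = n_2$.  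The remaining class-$0$ Betti element (the larger of $n_1 + n_3$ and $2n_2$, whichever is not $b$) has $|\mathsf L(b')| \le 3$, and its two gaps $|n_1 + n_3 - 2n_2|/4$ and $a - 2$ are both bounded by $a - 2$ after invoking the residue-strengthened inequality $n_2 \le n_1 + n_3 - 6$, itself a consequence of $n_2 \le 2n_1 - 4$ together with $n_3 \ge n_1 + 2$.
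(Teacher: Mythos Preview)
Your plan is the paper's plan: take $b=\min(n_1+n_3,2n_2)$, invoke Proposition~\ref{p:medsmallbetti} to get $\Delta(b)=\{a-2\}$, and then bound $\max\Delta(b')$ by $a-2$ for each of the remaining five Betti elements using the residue-sharpened MED inequalities.  Your identification of $2n_3$ as the delicate case, and of the factorization $2n_1+\tfrac{n_3-n_1}{2}\cdot 4$ as the one that rescues it, is exactly right and matches the paper's computation with $2n_3=2n_1+4e$.

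There is one genuine gap in your final paragraph.  Your inequality $n_2\le n_1+n_3-6$ only bounds $|n_1+n_3-2n_2|/4$ by $a-2$ when $b'=2n_2$ is the larger class-$0$ Betti element (so $a=(n_1+n_3)/4$).  In the other case $b'=n_1+n_3>2n_2$, you need $(n_1+n_3-2n_2)/4\le n_2/2-2$, i.e.\ $n_1+n_3\le 4n_2-8$, and your lower bound on $n_1+n_3-n_2$ points the wrong way.  This case genuinely occurs (e.g.\ $n_1=9$, $n_2=10$, $n_3=15$), and handling it is where the hypothesis $n_1<n_2$ enters most forcefully: the paper combines $2n_1\ge n_2+4$ and $n_1+n_2\ge n_3+4$ to get $3n_1\ge n_3+8$, hence $n_1+n_3\le 4n_1-8<4n_2-8$.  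Once you patch this case your outline is complete and coincides with the paper's proof.
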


\begin{proof}
Throughout this proof, we assume $n_1 = \min(n_1, n_2, n_3)$, as an analogous argument follows upon reversing the roles of $n_1$ and $n_3$ throughout.  

The Betti elements $n_1 + n_2$ and $n_2 + n_3$ are unique in their respective equivalence classes, and have singleton delta set by Proposition~\ref{p:medsmallbetti}.  Consider the Betti elements $b = 4a$ and $b' = 4(a + c)$ with $a, c \ge 0$.  Again by Proposition~\ref{p:medsmallbetti}, $b$ has singleton delta set, and for $b'$, there are two cases.  First, suppose
$$
b = 4a = n_1 + n_3
\qquad \text{and} \qquad
b' = 4(a + c) = 4c + n_1 + n_3 = 2n_2.
$$
Clearly $\mathsf L(b') \subseteq [2, a + c]$, so $\max\Delta(b') \le \max(c, a - 2)$.  We know $\Delta(b) = \{a-2\}$, and since $n_2$ is a minimal generator of $S$,
$$2n_1 \ge n_2 + 4
\qquad \text{and} \qquad
2n_3 \ge n_2 + 4,$$
so we obtain
$$4c = 2n_2 - (n_1 + n_3) \le n_1 + n_3 - 8 = 4(a - 2)$$
and thus $c \le a - 2$.  As such, $\max\Delta(b') \le \max\Delta(b)$.  
Second, suppose
$$
b = 4a = 2n_2
\qquad \text{and} \qquad
b' = 4(a + c) = 4c + 2n_2 = n_1 + n_3.
$$
Again, clearly $\mathsf L(b') \subseteq [2, a + c]$.  Since $n_2$ and $n_3$ are minimal generators of $S$, we have
$$2n_1 \ge n_2 + 4
\qquad \text{and} \qquad
n_1 + n_2 \ge n_3 + 4,$$
from which we obtain $3n_1 \ge n_3 + 8$.  From there, $n_1 < n_2$ implies
$$4c = n_1 + n_3 - 2n_2 \le 4n_1 - 2n_2 - 8 \le 2n_2 - 8 = 4(a - 2)$$
and we again conclude $c \le a - 2$ and $\max\Delta(b') \le \max\Delta(b)$.  

This leaves the Betti elements $2n_1$ and $2n_3$.  Write 
$$
2n_1 = n_2 + 4d
\qquad \text{and} \qquad
2n_3 = 2n_1 + 4e
$$
for $d, e \ge 1$.  
Now, since $n_1 < n_2$, we have
$4d = 2n_1 - n_2 < n_1 < 4a$,
so $d \le a - 1$.  As~such, since $\mathsf L(2n_1) \subset [2, d+1]$, we have 
$$\max\Delta(2n_1) \le a - 2 = \max\Delta(b).$$
Lastly, from $n_1 + n_2 \ge n_3 + 4$ and $3n_1 \ge n_3 + 8$, we obtain
$$
4e = 2n_3 - 2n_1 \le 2n_2 - 8
\qquad \text{and} \qquad
4e = 2n_3 - 2n_1 \le n_1 + n_3 - 8,
$$
so $e \le a - 2$ and thus $\max\Delta(2n_3) \le a - 2$ as well.  
\end{proof}

By examining where in the proof of Proposition~\ref{p:med4firstgen} the hypothesis $n_2 < n_1$ is used, we obtain the following.  

\begin{cor}\label{c:med4tastybettis}
If $m = 4$, then $\max\Delta(S)$ occurs at $\max(2n_2, n_1 + n_3)$ or $\min(2n_1, 2n_3)$.  
\end{cor}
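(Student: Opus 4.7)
The plan is to revisit the proof of Proposition~\ref{p:med4firstgen}, which establishes $\max\Delta(b) \le a - 2$ for every Betti element $b$ of $S$ under the assumption $n_1 = \min(n_1, n_2, n_3)$, where $4a = \min(n_1+n_3, 2n_2)$.  A careful reading reveals that the hypothesis (specifically, $n_1 \le n_2$) is invoked in exactly two places: in Case~2 of the class~0 analysis, bounding $\max\Delta(\max(n_1+n_3, 2n_2))$ when $2n_2 < n_1+n_3$; and in the estimate $4d = 2n_1 - n_2 < n_1 < 4a$, bounding $\max\Delta(2n_1)$.  After swapping $n_1 \leftrightarrow n_3$ if needed (so that $n_1 \le n_3$), these two places correspond precisely to the Betti elements $\max(n_1+n_3, 2n_2)$ and $\min(2n_1, 2n_3) = 2n_1$, whose max deltas may exceed $a - 2$ when the hypothesis fails.

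For each of the four remaining Betti elements, I will show $\max\Delta(b) \le M$, where $M := \max(\max\Delta(\max(n_1+n_3, 2n_2)), \max\Delta(\min(2n_1, 2n_3)))$.  Proposition~\ref{p:medsmallbetti} gives singleton delta sets $\{\ell_{12} - 1\}$, $\{\ell_{23} - 1\}$, $\{a - 2\}$ for $n_1+n_2, n_2+n_3, \min(n_1+n_3, 2n_2)$ respectively, where $4\ell_{12} = n_1+n_2-n_3$ and $4\ell_{23} = n_2+n_3-n_1$; the MED inequalities $n_2 < 2n_i$ and $n_i < n_j + n_k$ yield $\ell_{12} - 1, \ell_{23} - 1 \le a - 2$, and the key inequality $a - 2 \le \max\Delta(\max(n_1+n_3, 2n_2)) \le M$ can be established by exhibiting three factorizations of $\max(n_1+n_3, 2n_2) = \min(n_1+n_3, 2n_2) + 4c$ of lengths $2$, $c+2$, $a+c$ (the two-term factorization of $\max$; the two-term factorization of $\min$ augmented by $c$ copies of $4$; and the all-fours factorization) and ruling out intermediate lengths in $\{c+3, \ldots, a+c-1\}$.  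For $\max(2n_1, 2n_3) = 2n_3$, the MED inequalities alone (notably $n_1+n_2 \ge n_3+4$ and $3n_1 \ge n_3+8$) give $e \le a - 2$, where $4e = 2n_3 - 2n_1$; combined with the inclusion $\mathsf L(2n_3) \subseteq \{2\} \cup (\mathsf L(2n_1) + e)$ arising from the shift $2n_3 = 2n_1 + 4e$ and the isolated factorization $n_3 + n_3$ of length $2$, this yields $\max\Delta(2n_3) \le \max(e, \max\Delta(2n_1)) \le \max(a-2, \max\Delta(\min(2n_1, 2n_3))) \le M$.

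Combining these bounds, every Betti element outside $\{\max(n_1+n_3, 2n_2), \min(2n_1, 2n_3)\}$ has $\max\Delta \le M$, and since $\max\Delta(S)$ is realized at some Betti element by \cite[Theorem~2.5]{deltabetti}, the corollary follows.  The main obstacle is ruling out intermediate factorizations of $\max(n_1+n_3, 2n_2)$ with length strictly between $c+2$ and $a+c$: this requires a case analysis using the residue constraint $x_1 + 2x_2 + 3x_3 \equiv 0 \pmod{4}$ together with the MED-imposed minimality of each generator $n_i$, to verify no quadruple $(x_0, x_1, x_2, x_3) \in \ZZ_{\ge 0}^4$ simultaneously satisfies the factorization equation and has length in $\{c+3, \ldots, a+c-1\}$.
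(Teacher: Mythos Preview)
Your overall strategy---revisiting Proposition~\ref{p:med4firstgen}, isolating the two places where the hypothesis $n_1 \le n_2$ enters, and bounding the remaining four Betti elements by $M$---is the same route the paper takes.  The paper is far terser: it only explicitly treats $n_1+n_2$ and $n_2+n_3$ (showing both have $\max\Delta \le a-2$ via MED inequalities, just as you do), and folds everything else, including the crucial fact $a-2 \le \max\Delta(\max(2n_2,n_1+n_3))$, into the phrase ``by the proof of Proposition~\ref{p:med4firstgen}''.  Your plan to establish $\mathsf L(\max(2n_2,n_1+n_3)) = \{2,\,c+2,\,a+c\}$ by a residue-based case analysis is the right way to make that step honest, and it does go through.

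There is, however, a genuine error in your handling of $2n_3 = \max(2n_1,2n_3)$.  The inclusion
\[
\mathsf L(2n_3) \subseteq \{2\} \cup \bigl(\mathsf L(2n_1) + e\bigr)
\]
is false.  For $S = \<4,9,10,15\>$ (so $n_1=9$, $n_2=10$, $n_3=15$, $e=3$) one has $\mathsf L(2n_1)=\mathsf L(18)=\{2,3\}$, hence $\{2\}\cup(\mathsf L(2n_1)+e)=\{2,5,6\}$; but $30 = 3n_2$ gives $3 \in \mathsf L(30)$.  The shift $2n_3 = 2n_1 + 4e$ only justifies the \emph{reverse} inclusion $\supseteq$, since appending $e$ copies of $4$ to a factorization of $2n_1$ produces one of $2n_3$, whereas a factorization of $2n_3$ with $x_3=0$ may well have fewer than $e$ copies of $4$.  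Worse, even if $\subseteq$ held it would not yield your bound, since passing to a subset can only \emph{enlarge} gaps.  The fix is straightforward: use $\supseteq$ instead, and verify $\min\mathsf L(2n_3)=2$ and $\max\mathsf L(2n_3)=\max\mathsf L(2n_1)+e$ (the longest factorization of either element uses a single $n_2$ and the rest $4$'s).  Then $\mathsf L(2n_3)$ shares its endpoints with $\{2\}\cup(\mathsf L(2n_1)+e)$ and contains it, so its maximum gap is at most $\max(e,\max\Delta(2n_1)) \le \max(a-2,M)=M$, as you intended.
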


\begin{proof}
By the proof of Proposition~\ref{p:med4firstgen}, it suffices to ensure $\max\Delta(S)$ does not occur~at
$$
b = n_1 + n_2 = 4d + n_3
\qquad \text{or} \qquad
b' = n_2 + n_3 = 4e + n_1.
$$
Writing $4a = \min(2n_2, n_1 + n_3)$, summing these equalities above yields
$$4d + 4e = 2n_2 \ge 4a,$$
so $\max\Delta(b) = d - 1 \le a - 2$ and $\max\Delta(b') = e - 1 \le a - 2$.  
\end{proof}

\begin{thm}\label{t:med4tasty}
Suppose $m = 4$ and $n_2 < n_1, n_3$.  If $2n_1 + 2n_3 > n_2^2$, then $S$ is tasty.  In particular, for fixed $n_2$, there are only finitely many bland numerical semigroups.  
\end{thm}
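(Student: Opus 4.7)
My plan is to apply the blandness characterization from the background theorem together with Corollary~\ref{c:med4tastybettis}: $S$ is tasty iff no Betti element $b$ satisfies $\Delta_S(b) = \{\max\Delta(S)\}$, and the only Betti elements where $\max\Delta(S)$ can be attained are $n_1+n_3$ and $\min(2n_1,2n_3)$ (using $n_2 < n_1, n_3$ to discard the $2n_2$ option). Assuming WLOG that $n_1 \leq n_3$, so the latter is $2n_1$, it therefore suffices to verify that neither $\Delta(n_1+n_3)$ nor $\Delta(2n_1)$ equals $\{\max\Delta(S)\}$; all other Betti elements $b'$ have $\max\Delta(b') < \max\Delta(S)$ and so are automatic.

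The central step is to enumerate the factorizations of $n_1+n_3 = 4a$, where $a = (n_1+n_3)/4$, via residue analysis modulo~$4$ together with the MED inequalities. I expect to obtain
$$\mathsf L(n_1+n_3) = \{2\} \cup \bigl\{a - k(n_2-4)/2 : 0 \leq k \leq k_{\max}\bigr\}, \quad k_{\max} = \lfloor (n_1+n_3)/(2n_2) \rfloor,$$
whose non-trivial part is an arithmetic progression with common difference $(n_2-4)/2$ descending from $a$ to $L_{\min} := a - k_{\max}(n_2-4)/2$. A routine manipulation using the upper bound $k_{\max} \leq (n_1+n_3)/(2n_2)$ then shows the hypothesis $2n_1 + 2n_3 > n_2^2$ is precisely what is needed to force $L_{\min} - 2 > (n_2-4)/2$, so $\Delta(n_1+n_3)$ contains the two distinct elements $(n_2-4)/2$ and $L_{\min}-2$. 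In particular $|\Delta(n_1+n_3)| \geq 2$, and moreover $\max\Delta(S) \geq \max\Delta(n_1+n_3) = L_{\min}-2 > (n_2-4)/2$, a bound I will reuse.

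Next I would run the parallel enumeration for $2n_1 = n_2 + 4d$ and split on $k_{\max}^{(1)} := \lfloor n_1/n_2 - 1/2 \rfloor$. If $k_{\max}^{(1)} \geq 1$, then $(n_2-4)/2 \in \Delta(2n_1)$, so $\Delta(2n_1) = \{\max\Delta(S)\}$ would force $\max\Delta(S) = (n_2-4)/2$, contradicting the lower bound above. If $k_{\max}^{(1)} = 0$, then $\mathsf L(2n_1) = \{2, d+1\}$ and $\Delta(2n_1) = \{d-1\}$; a short parity argument (the interval $(3n_2/2 - 2,\, 3n_2/2)$ contains no integer congruent to $1 \pmod 4$) forces $d - 1 \leq (n_2-4)/2 < \max\Delta(S)$, again excluding singleton equality. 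The symmetric treatment handles $2n_3$.

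For the finiteness claim, combining this theorem with Proposition~\ref{p:med4firstgen} shows every bland MED semigroup with $m = 4$ satisfies one of $n_1 \leq n_2$, $n_3 \leq n_2$, or $2n_1+2n_3 \leq n_2^2$; each possibility, together with the MED constraint $n_3 < n_1+n_2$, confines $(n_1,n_3)$ to a bounded region for fixed $n_2$, yielding finitely many lattice points. The main obstacle I anticipate is the enumeration of factorizations of $n_1+n_3$ in the second paragraph: one must carefully invoke the MED inequalities (especially $n_1+n_2 > n_3$) together with modular constraints to rule out factorizations using $n_1$ or $n_3$ with multiplicity $\geq 2$, which otherwise would introduce additional lengths and potentially corrupt the gap count underlying the key estimate.
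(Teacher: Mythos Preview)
Your proposal is correct and follows essentially the same route as the paper: both reduce via Corollary~\ref{c:med4tastybettis} to the Betti elements $n_1+n_3$ and $\min(2n_1,2n_3)$, enumerate $\mathsf L(n_1+n_3)$ explicitly (your $k_{\max}$ and $L_{\min}$ are the paper's $q$ and $2q+r$, and your inequality $L_{\min} > n_2/2$ is exactly the paper's $2q+r > a$), and then dispose of $2n_1$. The only minor divergence is that you split the $2n_1$ analysis into two cases while the paper argues in one stroke; in fact your case $k_{\max}^{(1)} = 0$ is vacuous under the hypotheses (combining $n_1+n_2 \ge n_3+4$ with $2(n_1+n_3) > n_2^2$ forces $2n_1 \ge 3n_2$), and your anticipated obstacle in enumerating $\mathsf Z(n_1+n_3)$ dissolves immediately since any factorization using $n_1$ or $n_3$ must be $n_1+n_3$ itself, as removing one copy leaves a minimal generator.
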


\begin{proof}
As in the proof of Proposition~\ref{p:med4firstgen}, let $2n_2 = 4a$ so that $\mathsf L(2n_2) = \{2, a\}$.  Let 
$$
b = n_1 + n_3 = 4c
$$
for $c \ge 1$, and write $c = qa + r$ with $0 \le r < a$.  Any~factorization of $b$ aside from $n_1 + n_3$ uses only $n_2$ and $4$, the shortest of which is 
$$4r + 2qn_2 = 4r + 4aq = 4c = b.$$
This means $\Delta(b) = \{a - 2, 2q + r - 2\}$, so $S$ is tasty if $2q + r \ne a$.  This is achieved by
$$4a(2q + r) = 8qa + 4ar > 8qa + 8r = 8c = 2n_1 + 2n_3 > n_2^2 = 4a^2$$
whenever $2n_1 + 2n_3 > n_2^2$.  

Supposing $n_1 < n_3$, by Corollary~\ref{c:med4tastybettis} it remains to~show
$$b' = 2n_1 = 4d + n_2$$
is either non-singleton or does not contain $\max\Delta(S)$.  Since $n_1 + n_2 \ge n_3 + 4$, we have 
$$8d = 4n_1 - 2n_2 \ge 2n_1 + 2n_3 > n_2^2 > 4n_2 = 8a,$$
meaning some element of $\Delta(b')$ is at most $a - 2$, as desired.  An analogous argument when $n_3 < n_1$ completes the proof.  
\end{proof}

\section{Tasty and bland gluings of numerical semigroups}
\label{sec:gluings}

Given two numerical semigroups $S_1 = \<n_1, \ldots, n_r\>$ and $S_2 = \<n_{r+1}, \ldots, n_k\>$ and non-atoms $\lambda \in S_1$ and $\mu \in S_2$ such that $\gcd(\lambda, \mu) = 1$, we say the numerical semigroup 
$$\mu S_1 + \lambda S_2 = \<\mu n_1, \ldots, \mu n_r, \lambda n_{r+1}, \ldots, \lambda n_k\>$$
is a \emph{gluing} of $S_1$ and $S_2$ by $\lambda$ and $\mu$.  It is known that the above generating set for $S$ is minimal, and that 
$$\Betti(S) = \mu\Betti(S_1) \cup \lambda\Betti(S_2) \cup \{\lambda \mu\}.$$
For more background on gluings, see~\cite[Chapter~8]{numerical}.  

In this section, we investigate the following question.  
\begin{quote}
If $S_1$ and $S_2$ are fixed and $\mu$ and $\lambda$ are allowed to vary, what can be said about how often the gluing $\mu S_1 + \lambda S_2$ is tasty, and how often it is bland?
\end{quote}

Our results are extremal in nature.  Theorem~\ref{t:infinitetastygluings} implies there will always be infinitely many tasty gluings $\mu S_1 + \lambda S_2$ for fixed $S_1$ and $S_2$, but the same does not hold for bland gluings.  In~particular, it is possible for two given numerical semigroups $S_1$ and $S_2$ to have infinitely many bland gluings (Theorem~\ref{t:infiniteblandgluings}), finitely many bland gluings (Theorem~\ref{t:singleblandgluing}), or no bland gluings (Theorem~\ref{t:noblandgluings}).  

\begin{thm}\label{t:infinitetastygluings}
Given numerical semigroups $S_1$ and $S_2$, there are infinitely many gluings $S = \mu S_1 + \lambda S_2$ that are tasty.
\end{thm}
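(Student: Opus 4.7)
The plan is to fix $\lambda \in S_1$ once and for all and let $\mu \in S_2$ range over infinitely many large integers coprime to $\lambda$, arranging that the gluing Betti element $\lambda\mu$ dominates the delta set of $S$ and thereby certifies tastiness. Concretely, choose a non-atom $\lambda \in S_1$ with $|\mathsf L_{S_1}(\lambda)| \ge 2$ and $\lambda > \max\Betti(S_1)$; for instance $\lambda = t n_1 n_r$ for $t$ large (with $n_1 < n_r$ the extreme generators of $S_1$) admits factorizations of lengths $tn_r$ and $tn_1$. Write $L = \max\mathsf L_{S_1}(\lambda)$, $D_i = \max\Delta(S_i)$, and $B = \max\Betti(S_2)$. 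Restrict attention to $\mu \in S_2$ satisfying (i) $\mu$ is a non-atom, (ii) $\gcd(\mu,\lambda) = 1$, (iii) $\mu > B$, and (iv) $\min\mathsf L_{S_2}(\mu) > L + \max(D_1, D_2)$. Since $S_2$ is cofinite in $\ZZ_{\ge 0}$ and $\min\mathsf L_{S_2}(\mu) \ge \mu / \max\{\text{generators of } S_2\}$, conditions (i), (iii), and (iv) hold for all sufficiently large $\mu \in S_2$; infinitely many such $\mu$ are also coprime to $\lambda$.

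The crux is a short splitting lemma: if $b \in S_1$ with $b < \lambda$, then every factorization $\mu a + \lambda c = \mu b$ in $S$ satisfies $c = 0$. Indeed, $\gcd(\mu,\lambda) = 1$ forces $\mu \mid c$; writing $c = \mu k$ yields $a = b - \lambda k \ge 0$, which together with $b < \lambda$ forces $k = 0$. Hence $\mathsf L_S(\mu b) = \mathsf L_{S_1}(b)$, and by symmetry $\mathsf L_S(\lambda b') = \mathsf L_{S_2}(b')$ whenever $b' < \mu$; the same trick applied at $\lambda\mu$ gives $\mathsf L_S(\lambda\mu) = \mathsf L_{S_1}(\lambda) \cup \mathsf L_{S_2}(\mu)$. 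Using $\Betti(S) = \mu\Betti(S_1) \cup \lambda\Betti(S_2) \cup \{\lambda\mu\}$, this yields $\max\Delta_S(\mu b) \le D_1$ for each $b \in \Betti(S_1)$ (since $b < \lambda$ by the choice of $\lambda$) and $\max\Delta_S(\lambda b') \le D_2$ for each $b' \in \Betti(S_2)$ (since $b' < \mu$ by (iii)) at every non-gluing Betti element.

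For the gluing Betti element $\lambda\mu$, condition (iv) makes $\mathsf L_{S_1}(\lambda)$ and $\mathsf L_{S_2}(\mu)$ disjoint with a gap $g = \min\mathsf L_{S_2}(\mu) - L > \max(D_1, D_2)$, so $\Delta_S(\lambda\mu) = \Delta_{S_1}(\lambda) \cup \{g\} \cup \Delta_{S_2}(\mu)$ and $\max\Delta(S) = g$, attained among Betti elements only at $\lambda\mu$. Since $|\mathsf L_{S_1}(\lambda)| \ge 2$, $\Delta_{S_1}(\lambda)$ is nonempty with every entry at most $D_1 < g$, so $\Delta_S(\lambda\mu)$ contains $g$ together with a strictly smaller delta; in particular $\Delta_S(\lambda\mu) \ne \{\max\Delta(S)\}$. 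Combined with the bounds from the previous paragraph, no Betti element of $S$ has delta set equal to $\{\max\Delta(S)\}$, which by the theorem cited at the end of Section~\ref{sec:background} forces $S$ to be tasty. The principal obstacle is the splitting lemma: it is what prevents ``mixed'' factorizations at the inherited Betti elements, guaranteeing that the new large delta $g$ appears only at $\lambda\mu$ and there is necessarily accompanied inside $\Delta_S(\lambda\mu)$ by a strictly smaller delta from $\Delta_{S_1}(\lambda)$.
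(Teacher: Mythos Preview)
Your proof is correct and follows essentially the same strategy as the paper's: fix $\lambda \in S_1$ with $|\mathsf L_{S_1}(\lambda)| \ge 2$, then take $\mu \in S_2$ large enough that the gap $g = \min\mathsf L_{S_2}(\mu) - \max\mathsf L_{S_1}(\lambda)$ exceeds $\max(\Delta(S_1)\cup\Delta(S_2))$, so that $\max\Delta(S)$ is realized only at $\lambda\mu$, where the delta set is non-singleton. The main difference is packaging: the paper picks $\lambda$ prime and $\mu = pn_k$ (guaranteeing coprimality) and cites \cite[Theorem~8.2]{numerical} for $\mathsf L_S(\lambda\mu) = \mathsf L_{S_1}(\lambda)\cup\mathsf L_{S_2}(\mu)$, whereas you impose $\lambda > \max\Betti(S_1)$ and $\mu > \max\Betti(S_2)$ and prove the splitting lemma directly, which makes explicit why the inherited Betti elements $\mu b$ and $\lambda b'$ cannot contribute $\max\Delta(S)$.
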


\begin{proof}
Let $n_k$ denote the largest atom of $S_2$, and let $d = \max(\Delta(S_1) \cup \Delta(S_2))$.  First, choose any prime $\lambda \in S_1$ such that $|\mathsf L_{S_1}(\lambda)| \ge 2$ and $\lambda > n_2$.  Next, choose any prime $p > \lambda$ such that $$p \ge \max \mathsf L_{S_1}(\lambda) + d + 1$$
and let $\mu = pn_k$ so that $\min \mathsf L_{S_2}(\mu) = p$.  Clearly $\gcd(\lambda, \mu) = 1$.  
Letting $S = \mu S_1 + \lambda S_2$, by~\cite[Theorem~8.2]{numerical} we have
$$\mathsf L_S(\lambda\mu) = \mathsf L_{S_1}(\lambda) \cup \mathsf L_{S_2}(\mu)$$
and thus
$$\max \Delta(S) = \max \Delta_S(\lambda\mu) = \min \mathsf L_{S_2}(\mu) - \max \mathsf L_{S_1}(\lambda) = p - \max \mathsf L_{S_1}(\lambda) \ge d + 1,$$
which ensures $S$ is tasty since $|\mathsf L_{S_1}(\lambda)| \ge 2$.  
\end{proof}

\begin{thm}\label{t:infiniteblandgluings}
Fix numerical semigroups $S_1$ and $S_2$.  If $S_1$ is bland and 
$$\max\Delta(S_1) \ge \max\Delta(S_2),$$
then there exists infinitely many gluings $\mu S_1 + \lambda S_2$ that are bland.
\end{thm}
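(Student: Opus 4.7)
The plan is to exhibit, for each sufficiently large positive integer $r$, a gluing pair $(\lambda, \mu)$ with $\lambda = \beta_1 r$ such that $S := \mu S_1 + \lambda S_2$ is bland, where $\alpha_i$ and $\beta_i$ denote the smallest and largest atoms of $S_i$.  Let $d = \max\Delta(S_1)$.  Blandness of $S_1$ supplies $b_0 \in \Betti(S_1)$ with $\LD_{S_1}(b_0) = 1/d$, which forces $\Delta_{S_1}(b_0) = \{d\}$ since each of the $|\mathsf L_{S_1}(b_0)| - 1$ consecutive differences is at most $d$ and they sum to $(|\mathsf L_{S_1}(b_0)| - 1)d$.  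I would target pairs satisfying (a) $\lambda > \max\Betti(S_1)$ is a non-atom of $S_1$, (b) $\mu > \max\Betti(S_2)$ is a non-atom of $S_2$ with $\gcd(\lambda, \mu) = 1$, and (c) the \emph{range overlap} $\min\mathsf L_{S_1}(\lambda) \le \max\mathsf L_{S_2}(\mu)$ and $\min\mathsf L_{S_2}(\mu) \le \max\mathsf L_{S_1}(\lambda)$.

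First I would show that under (a) and (b), each Betti element inherited from $S_1$ or $S_2$ retains its delta set.  A factorization $\mu b = \sum_i x_i \mu n_i^{(1)} + \sum_j y_j \lambda n_j^{(2)}$ of $\mu b$ in $S$, for $b \in \Betti(S_1)$, may be rewritten as $\mu(b - s) = \lambda t$ with $s \in S_1$ and $t \in S_2$; if $t > 0$ then $\gcd(\lambda, \mu) = 1$ forces $\lambda \mid (b - s)$, giving $b \ge \lambda > \max\Betti(S_1)$, a contradiction.  Hence every factorization of $\mu b$ uses only the $\mu S_1$-generators, so $\mathsf L_S(\mu b) = \mathsf L_{S_1}(b)$ and $\Delta_S(\mu b) \subseteq \Delta(S_1) \subseteq [1, d]$; the symmetric argument (using $\max\Delta(S_2) \le d$) gives $\Delta_S(\lambda b') \subseteq [1, d]$ for every $b' \in \Betti(S_2)$.

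Second I would bound $\max\Delta_S(\lambda\mu)$.  By \cite[Theorem~8.2]{numerical}, $\mathsf L_S(\lambda\mu) = \mathsf L_{S_1}(\lambda) \cup \mathsf L_{S_2}(\mu)$, so I would run a case analysis on consecutive elements $c < c'$ of this union.  When both lie in the same length set, the gap is in $\Delta(S_1)$ or $\Delta(S_2)$ and thus at most $d$.  In the mixed case, say $c \in \mathsf L_{S_1}(\lambda)$ and $c' \in \mathsf L_{S_2}(\mu) \setminus \mathsf L_{S_1}(\lambda)$, either some length in $\mathsf L_{S_1}(\lambda)$ exceeds $c$, in which case the smallest such $c^*$ satisfies $c^* > c'$ and $c^* - c \in \Delta(S_1)$, so $c' - c \le d$; or $c = \max\mathsf L_{S_1}(\lambda)$, and then (c) guarantees some $b \in \mathsf L_{S_2}(\mu)$ with $b \le c$, whose successor in $\mathsf L_{S_2}(\mu)$ is precisely $c'$, giving $c' - c \le c' - b \in \Delta(S_2)$.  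Invoking $\max\Delta(S) = \max_{b \in \Betti(S)} \max\Delta_S(b)$ from \cite[Theorem~2.5]{deltabetti}, we conclude $\max\Delta(S) = d$, and since $\Delta_S(\mu b_0) = \{d\}$, the gluing $S$ is bland.

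Finally, to produce infinitely many valid pairs, I would set $\lambda = \beta_1 r$ so that $\min\mathsf L_{S_1}(\lambda) = r$ and $\max\mathsf L_{S_1}(\lambda) \ge \beta_1 r/\alpha_1 - O(1)$.  Condition (c) then confines $\mu$ to an interval of the form $[\alpha_2 r + O(1),\ (\beta_1\beta_2/\alpha_1) r - O(1)]$ of length $\Omega(r)$.  For $r$ large, this window lies past both the Frobenius number of $S_2$ and $\max\Betti(S_2)$, so every integer in it is a non-atom of $S_2$, and standard coprimality density yields at least one coprime to $\beta_1 r$.  Varying $r$ over the positive integers then produces infinitely many distinct bland gluings.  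The main obstacle is the boundary sub-case of the analysis at $\lambda\mu$ where $c = \max\mathsf L_{S_1}(\lambda)$, which is precisely where the range overlap hypothesis is indispensable to keep the cross-set gap bounded by $d$.
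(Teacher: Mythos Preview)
Your argument is correct, and the overall architecture matches the paper's: control the delta sets of the inherited Betti elements by taking $\lambda>\max\Betti(S_1)$ and $\mu>\max\Betti(S_2)$, then arrange that $\Delta_S(\lambda\mu)\subseteq[1,d]$ so that the bland witness $\mu b_0$ survives.  Where you diverge is in the mechanism for bounding $\Delta_S(\lambda\mu)$.  The paper simply takes $\lambda$ to be a large prime in $S_1$ and sets $\mu=(\max\mathsf L_{S_1}(\lambda))\,n_k$, so that $\min\mathsf L_{S_2}(\mu)=\max\mathsf L_{S_1}(\lambda)$ on the nose; the two length sets then abut at a single point and $\Delta_S(\lambda\mu)=\Delta_{S_1}(\lambda)\cup\Delta_{S_2}(\mu)$ with no case analysis required, while primality of $\lambda$ (together with $\max\mathsf L_{S_1}(\lambda)<\lambda$ and $\lambda>n_k$) makes $\gcd(\lambda,\mu)=1$ automatic.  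Your version instead imposes only a two-sided range overlap and then runs a four-case analysis on consecutive lengths, and handles coprimality by a density count inside an interval of length $\Omega(r)$.  Both work; the paper's choice trades generality for a much shorter proof, while your formulation shows that in fact an entire linear-width band of $\mu$'s works for each $\lambda=\beta_1 r$, which is a slightly stronger conclusion than the theorem asks for.
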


\begin{proof}
Let $n_k$ denote the largest atom of $S_2$.  Choose any prime $\lambda \in S_1$ such that 
$$
\lambda > \max \Betti(S_1),
\qquad
\lambda > n_2,
\qquad \text{and} \qquad
\max \mathsf L_{S_1}(\lambda) > \max \Betti(S_2),
$$
and let $\mu = (\max \mathsf L_{S_1}(\lambda))n_k$ so that $\min \mathsf L_{S_2}(\mu) = \max \mathsf L_{S_1}(\lambda)$.  Clearly $\max \mathsf L_{S_1}(\lambda) < \lambda$, so $\gcd(\lambda,\mu) = 1$.  Letting $S = \mu S_1 + \lambda S_2$, we see $\max \Betti(S) = \lambda\mu$, ensuring
\begin{itemize}
\item 
$\mathsf L_S(\mu b) = \mathsf L_{S_1}(b)$ for each $b \in \Betti(S_1)$, 

\item 
$\mathsf L_S(\lambda b) = \mathsf L_{S_2}(b)$ for each $b \in \Betti(S_2)$, and

\item 
$\mathsf L_S(\lambda\mu) = \mathsf L_{S_1}(\lambda) \cup \mathsf L_{S_2}(\mu)$ with $\Delta_S(\lambda\mu) \le \max(\Delta(S_1) \cup \Delta(S_2))$. 

\end{itemize}
We conclude $\max \Delta(S)$ is attained at a Betti element with singleton delta set.  
\end{proof}

\begin{thm}\label{t:singleblandgluing}
Let $S_1 = \<2, 3\>$ and $S_2 = \<6, 9, 20\>$.  A gluing $S = \mu S_1 + \lambda S_2$ is bland if and only if $\lambda = 4$ and $\mu = 27$.  
\end{thm}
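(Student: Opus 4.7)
I plan to show first that blandness forces $\Delta(S)=\{1\}$, then to identify $(\lambda,\mu)=(4,27)$ as the unique gluing satisfying this.

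For the reduction, note that $\Betti(S)=\{6\mu,18\lambda,60\lambda,\lambda\mu\}$ (using $\Betti(S_1)=\{6\}$ and $\Betti(S_2)=\{18,60\}$) and that in a gluing every factorization of $n\in S$ decomposes uniquely as $n=\mu x+\lambda y$ with $x\in S_1$, $y\in S_2$, and $\lambda\mid x$ (forced by $\gcd(\lambda,\mu)=1$). Applying this to the first three Betti elements, the length sets $\mathsf L_S(6\mu)$, $\mathsf L_S(18\lambda)$, and $\mathsf L_S(60\lambda)$ always contain $\{2,3\}$, $\{2,3\}$, and $\mathsf L_{S_2}(60)=\{3,7,8,9,10\}$ respectively, each of which contains two consecutive integers; so the corresponding $\Delta_S$ contains $1$, forcing $\LD_S(b)>1/\max\Delta(S)$ at each of these Bettis whenever $\max\Delta(S)\geq 2$. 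By Theorem~2.6 the only remaining candidate is $\lambda\mu$, and for $\mathsf L_S(\lambda\mu)=\mathsf L_{S_1}(\lambda)\cup \mathsf L_{S_2}(\mu)$ to be an arithmetic progression of common difference $\geq 2$, both $\mathsf L_{S_1}(\lambda)$ and $\mathsf L_{S_2}(\mu)$ must be singletons. Enumerating, $\lambda\in\{4,5,7\}$ and $\mu$ lies in the finite set of unique-factorization non-atoms of $S_2$ (all with $\ell_\mu\leq 5$, hence $|\ell_0-\ell_\mu|\leq 3$); checking these finitely many pairs against the level decomposition of $\mathsf L_S(60\lambda)$ reveals $\max\Delta_S(60\lambda)>|\ell_0-\ell_\mu|$ in every case, yielding a contradiction. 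Thus $\max\Delta(S)=1$.

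For the uniqueness step, $\Delta(S)=\{1\}$ is equivalent to every Betti length set being an interval. The binding constraint is on $60\lambda$: the level decomposition gives
$$\mathsf L_S(60\lambda)=\bigcup_{k\geq 0,\,60-\mu k\in S_2}\bigl(\mathsf L_{S_1}(\lambda k)+\mathsf L_{S_2}(60-\mu k)\bigr),$$
with $k=0$ contributing $\{3,7,8,9,10\}$, so the levels $k\geq 1$ must fill in $\{4,5,6\}$; this forces $\mu\leq 60$. A finite case analysis, fixing $\lambda$ a non-atom of $S_1$ and running over non-atoms $\mu\leq 60$ of $S_2$ with $\gcd(\lambda,\mu)=1$, singles out $(\lambda,\mu)=(4,27)$: the $k=1$ level contributes $\{2\}+\mathsf L_{S_2}(33)=\{6,7\}$ and the $k=2$ level contributes $\{3,4\}+\mathsf L_{S_2}(6)=\{4,5\}$, giving $\mathsf L_S(240)=\{3,4,\ldots,10\}$. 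Direct computation then verifies that $\mathsf L_S(162)=\{2,3,4,5\}$, $\mathsf L_S(72)=\{2,3\}$, and $\mathsf L_S(108)=\{2,3,4\}$ are also intervals at this pair.

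\textbf{Main obstacle.} The principal difficulty is the finite case analysis: for each candidate $\lambda$ one must rule out every compatible $\mu$, verifying that the levels $k\geq 1$ fail to cover all of $\{4,5,6\}$. The elementary bound $\min\mathsf L_{S_1}(\lambda k)\geq \lceil\lambda k/3\rceil$ rules out $\lambda\geq 10$ (no level then reaches down to $4$), and for the remaining $\lambda\in\{4,5,6,7,8\}$ the joint constraints $\gcd(\lambda,\mu)=1$, $\mu\leq 60$, and $60-\mu k\in S_2$ cut the search to a short enumeration isolating $(4,27)$.
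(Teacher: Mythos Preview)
Your approach is genuinely different from the paper's.  The paper argues directly that $S$ is tasty whenever $\mu > 61$ (since then $\lambda\mu$ is the largest Betti element and $1 \in \Delta_{S_2}(\mu)$ forces $1 \in \Delta_S(\lambda\mu)$, while $\max\Delta_S(78\lambda) \ge 4$), and similarly when $\mu \le 60$ and $\lambda > 33$ (since then $\mathsf L_{S_1}(\lambda)$ and $\mathsf L_{S_2}(\mu)$ are separated by a gap of size $\ge 2$ in $\mathsf L_S(\lambda\mu)$, yet $1 \in \Delta_{S_1}(\lambda)$).  The finitely many remaining pairs are dispatched by a \texttt{GAP} computation.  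By contrast, you first reduce blandness to $\Delta(S) = \{1\}$ and then hunt for the unique $(\lambda,\mu)$ via the level decomposition of $\mathsf L_S(60\lambda)$.  Your route is more structural and avoids bounding $\lambda$ at the outset, but it trades one finite check for two, and the first of these (ruling out $\max\Delta(S) \ge 2$) is not obviously shorter.

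There is one real gap.  Your assertion that ``both $\mathsf L_{S_1}(\lambda)$ and $\mathsf L_{S_2}(\mu)$ must be singletons'' is immediate for $\mathsf L_{S_1}(\lambda)$ (since $\Delta(S_1) = \{1\}$), but for $\mathsf L_{S_2}(\mu)$ it requires knowing that every non-singleton length set in $S_2 = \langle 6,9,20\rangle$ contains two consecutive integers.  This is true---if $\mu$ has two factorizations with the same number of $20$'s, the $\langle 6,9\rangle$-part already gives a gap of $1$; and one can check that two factorizations with different numbers of $20$'s force such a pair to exist---but it is a lemma specific to $S_2$ that you have not stated or proved.  Without it, $\mathsf L_{S_2}(\mu)$ could in principle be an arithmetic progression of step $\ge 2$, and your reduction to singletons collapses.

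There are also two small slips in your Step~2 case analysis.  Your bound ``$\lambda \ge 10$ is ruled out since no level reaches down to $4$'' fails for $(\lambda,\mu) = (11,60)$, where the $k=1$ level is $\mathsf L_{S_1}(11) + \{0\} = \{4,5\}$; this pair is still eliminated, but because $6 \notin \mathsf L_S(660)$, not for the reason you give.  And your list ``remaining $\lambda \in \{4,5,6,7,8\}$'' omits $\lambda = 9$, which must also be checked (it too fails, e.g.\ at $\mu = 40$ the length $6$ is missing).  These are easily repaired, but as written the enumeration is incomplete.
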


\begin{proof}
Note that $\Betti(S) = \{6\mu, 18\lambda, 60\lambda, \lambda\mu\}$, and that $\Delta_S(6\mu)$, $\Delta_S(18\lambda)$, and $\Delta_S(60\lambda)$ each contain $1$.  

First, suppose $\mu > 61$.  In this case, $\mu\lambda > 60\lambda > 18\lambda$, so $\max \Delta(S) \ge \max \Delta(S_2) = 4$, and $1 \in \Delta_S(\lambda\mu)$ since $1 \in \Delta_{S_2}(\mu)$, so $S$ must be tasty.  

Next, suppose $\mu \leq 60$ and $\lambda > 33$.  In this case, $1 \in \Delta_{S_1}(\lambda)$ and thus $1 \in \Delta_S(\lambda\mu)$, so $S$ is tasty if and only if $\max\Delta(S) > 1$.  We have $\mathsf L_{S_2}(\mu) \subseteq [2,10] \cap \ZZ$, and since $\lambda > 33$, $\min \mathsf L_{S_1}(\lambda) \geq 12$. From this, we conclude $\max \Delta_S(\lambda\mu) \ge 2$ and thus $S$ is tasty. 

At this point, only finitely many gluings remain, and an exhaustive computation with the \texttt{GAP} package \texttt{numericalsgps}~\cite{numericalsgpsgap} completes the proof.  
\end{proof}

\begin{example}\label{e:singleblandgluing}
We briefly examine the exceptional case identified in Theorem~\ref{t:singleblandgluing}.  Let~$S_1 = \<2, 3\>$, $S_2 = \<6, 9, 20\>$, $\lambda = 4$, and $\mu = 27$, and let 
$$S = \mu S_1 + \lambda S_2 = \<2\mu, 3\mu, 6\lambda, 9\lambda, 20\lambda\>.$$
From $\lambda\mu = 6\mu + 6\lambda + 2\mu$, one can can readily verify
$$\mathsf L_S(60\lambda) = \mathsf L_{S_1}(\lambda) \cup (\mathsf L_S(6\mu) + 2) = \{3, 7, 8, 9, 10\} \cup \{4,5,6,7\}$$
and
$$\mathsf L_S(\lambda\mu) = \mathsf L_{S_1}(\lambda) \cup \mathsf L_{S_2}(\mu) = \{2,4\} \cup \{3\},$$
resulting in $\Delta(S) = \{1\}$ and ensuring $S$ is bland.  
\end{example}

\begin{thm}\label{t:noblandgluings}
Every gluing $S = \mu S_1 + \lambda S_2$ of $S_1 = \<2, 3\>$ and $S_2 = \<6, 9, 26\>$ is tasty. 
\end{thm}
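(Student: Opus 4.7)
Given $S = \mu S_1 + \lambda S_2$, we have $\Betti(S) = \{6\mu, 18\lambda, 78\lambda, \lambda\mu\}$, and the gluing decomposition $\mu \cdot 0 + \lambda \cdot 78 = 78\lambda$ yields $\mathsf L_S(78\lambda) \supseteq \mathsf L_{S_2}(78) = \{3, 9, 10, 11, 12, 13\}$. In particular $1 \in \Delta_S(78\lambda)$ (from the consecutive lengths $9$ and $10$), and the gap between $3$ and $9$ forces $\max\Delta_S(78\lambda) \geq 2$ unless every length in $\{4,5,6,7,8\}$ appears in $\mathsf L_S(78\lambda)$ via some other decomposition. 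Combined with the observations $\mathsf L_S(6\mu) \supseteq \{2,3\}$, $\mathsf L_S(18\lambda) \supseteq \{2,3\}$, and $\mathsf L_S(\lambda\mu) = \mathsf L_{S_1}(\lambda) \cup \mathsf L_{S_2}(\mu)$, my plan is to show that every Betti element $b$ satisfies $\Delta_S(b) \neq \{\max\Delta(S)\}$, which by the characterization of blandness recalled just after Definition~\ref{d:lengthdensity} suffices for tastiness.

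The argument will mirror the template of the proof of Theorem~\ref{t:singleblandgluing}. For $\mu > 78$, the condition $\mu a_1 + \lambda a_2 = 78\lambda$ with $(a_1, a_2) \in S_1 \times S_2$ forces $a_1 = 0$, so $\mathsf L_S(78\lambda) = \mathsf L_{S_2}(78)$ exactly, giving $\Delta_S(78\lambda) = \{1, 6\}$ and $\max\Delta(S) \geq 6$. Since $\Delta_S(6\mu)$ and $\Delta_S(18\lambda)$ each contain $1$ (from the $\{2,3\}$ seeds), and $\Delta_S(\lambda\mu) \ni 1$ once $\mathsf L_{S_1}(\lambda)$ is a nontrivial interval or $1 \in \Delta_{S_2}(\mu)$ (which holds for $\mu$ large by the quasilinear analysis of Section~\ref{sec:asymptotic}), no Betti $b$ can realize $\Delta_S(b) = \{\max\Delta(S)\} \supseteq \{6\}$, so $S$ is tasty in this range. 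A symmetric argument handles $\lambda$ above a suitable threshold, exploiting that $\mathsf L_{S_1}(\lambda)$ is then a nontrivial interval of consecutive integers, forcing $1 \in \Delta_S(\lambda\mu)$ automatically, while $\Delta_S(78\lambda)$ still contains a gap of size $\geq 2$.

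After these structural reductions, only finitely many pairs $(\lambda, \mu)$ remain, each of which can be verified directly via an exhaustive computation with the \texttt{GAP} package \texttt{numericalsgps}~\cite{numericalsgpsgap}, completing the proof. The main obstacle is certifying the finite remainder with care: one must verify that $\mathsf L_S(78\lambda)$ is never an interval $\{3, 4, \ldots, M\}$---equivalently, that at least one length in $\{4,5,6,7,8\}$ is always missing from $\mathsf L_S(78\lambda)$---and, in the edge cases where $\lambda \in \{4, 5, 7\}$ (making $\mathsf L_{S_1}(\lambda)$ a singleton) and $\mathsf L_{S_2}(\mu)$ is also a singleton, that the resulting singleton $\Delta_S(\lambda\mu)$ does not accidentally coincide with $\{\max\Delta(S)\}$. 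Both conditions are well within reach of direct enumeration, but unlike the $S_2 = \langle 6, 9, 20\rangle$ setting of Theorem~\ref{t:singleblandgluing}, no exceptional pair emerges, because the 6-gap in $\mathsf L_{S_2}(78)$ is too wide to be closed by the short contributions from $t \geq 1$ decompositions.
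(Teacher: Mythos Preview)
Your proposal is correct and follows essentially the same approach as the paper: split into the regimes $\mu > 78$ and $\lambda$ large (the paper uses $\lambda > 42$), show in each that every Betti element has $1$ in its delta set while $\max\Delta(S) \ge 2$, and then dispatch the remaining finitely many $(\lambda,\mu)$ by computer via \texttt{numericalsgps}. The only cosmetic difference is that in the large-$\lambda$ case the paper extracts the witness $\max\Delta(S) \ge 2$ from $\Delta_S(\lambda\mu)$ (using $\min\mathsf L_{S_1}(\lambda) \ge 15 > 13 \ge \max\mathsf L_{S_2}(\mu)$), whereas you extract it from the persistent gap in $\Delta_S(78\lambda)$; both work.
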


\begin{proof}
We begin by noting $\Betti(S_2) = \{18,78\}$, $\Delta_{S_2}(18) = \{1\}$, and $\Delta_{S_2}(78) = \{1,6\}$, and that every $n \in S_2$ with $n > 73$ has $|\mathsf L_{S_2}(n)| \ge 2$.  

First, suppose $\mu > 78$.  It is easy to show that $1$ lies in $\Delta_S(b)$ for each $b \in \Betti(S)$.  Since $1 \in \Delta_{S_2}(\mu)$ implies $1 \in \Delta_S(\mu\lambda)$, and since $\lambda\mu$ is the largest Betti element, $1$ lies in the delta set of each $b \in \Betti(S)$. Furthermore, $\max \Delta_{S}(78\lambda) = 6$, so $S$ is tasty.

Next, suppose $\mu \le 78$ and $\lambda > 42$.  We have $\min\mathsf L_{S_1}(\lambda) \ge 15$ and $\mathsf L_{S_2}(\mu) \subseteq [2,13] \cap \ZZ$, so $\max \Delta_S(\lambda\mu) \ge 2$, and $1 \in \Delta_S(\lambda\mu)$ since $1 \in \Delta_{S_1}(\mu)$.  This again implies $S$ is tasty. 

Now, all remaining gluings satisfy $\lambda \le 42$ and $\mu \le 78$, and an exhaustive computation using~\cite{numericalsgpsgap} verifies each is tasty.  
\end{proof}

Our final result of this section concerns gluings $\mu S + \lambda S$ of a numerical semigroup~$S$ with itself.  In the case $\mathsf e(S) = 2$, we identify the asymptotic proportion of gluings $\mu S + \lambda S$ that are tasty (Theorem~\ref{t:tastyproportion}).  The plots in Figure~\ref{f:selfglueplots} depicts the choices of $\lambda$ and $\mu$ that are assured tasty or bland by Proposition~\ref{p:blandtastyregions}.

\begin{figure}
\begin{center}
\includegraphics[height=2.9in]{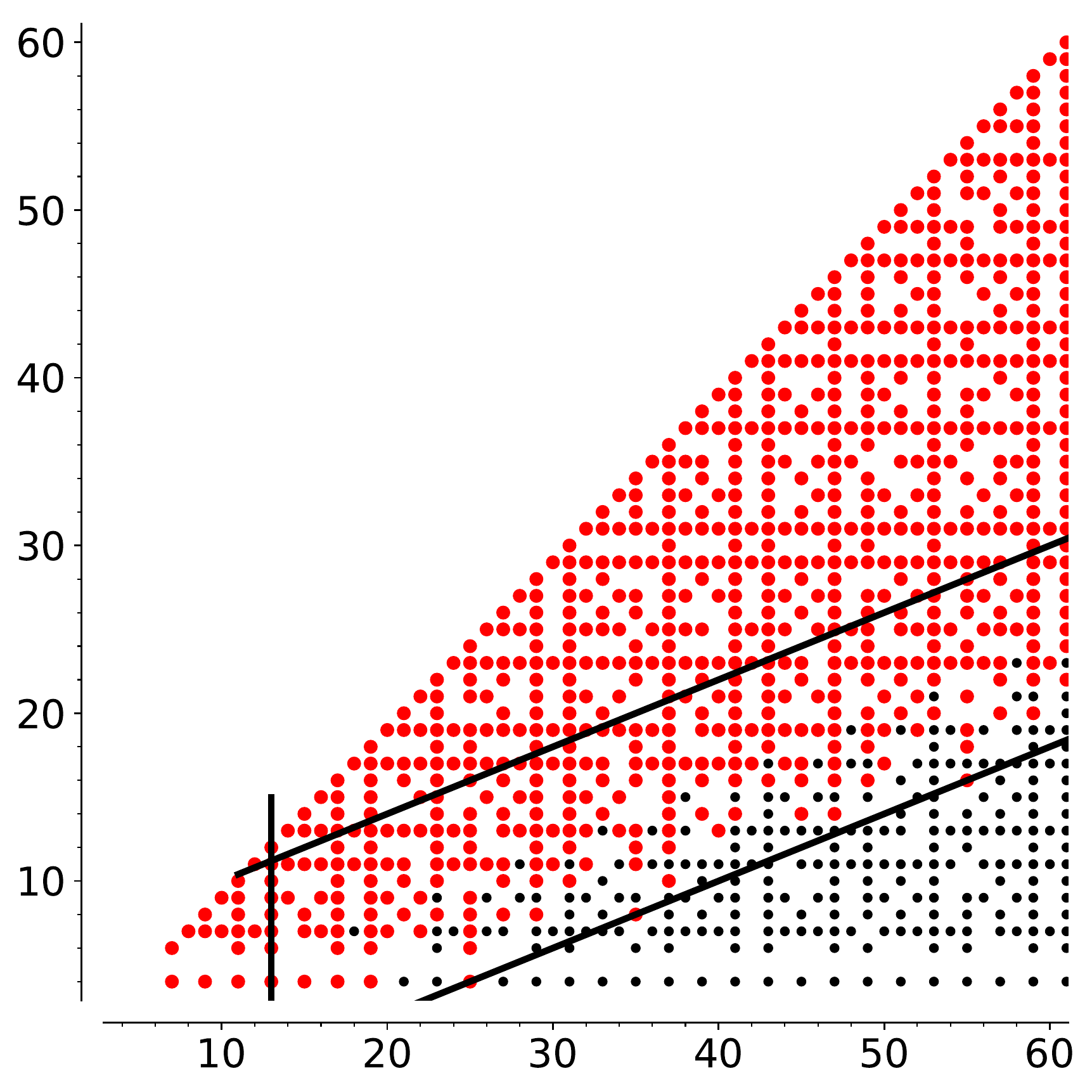}
\hspace{0.1in}
\includegraphics[height=2.9in]{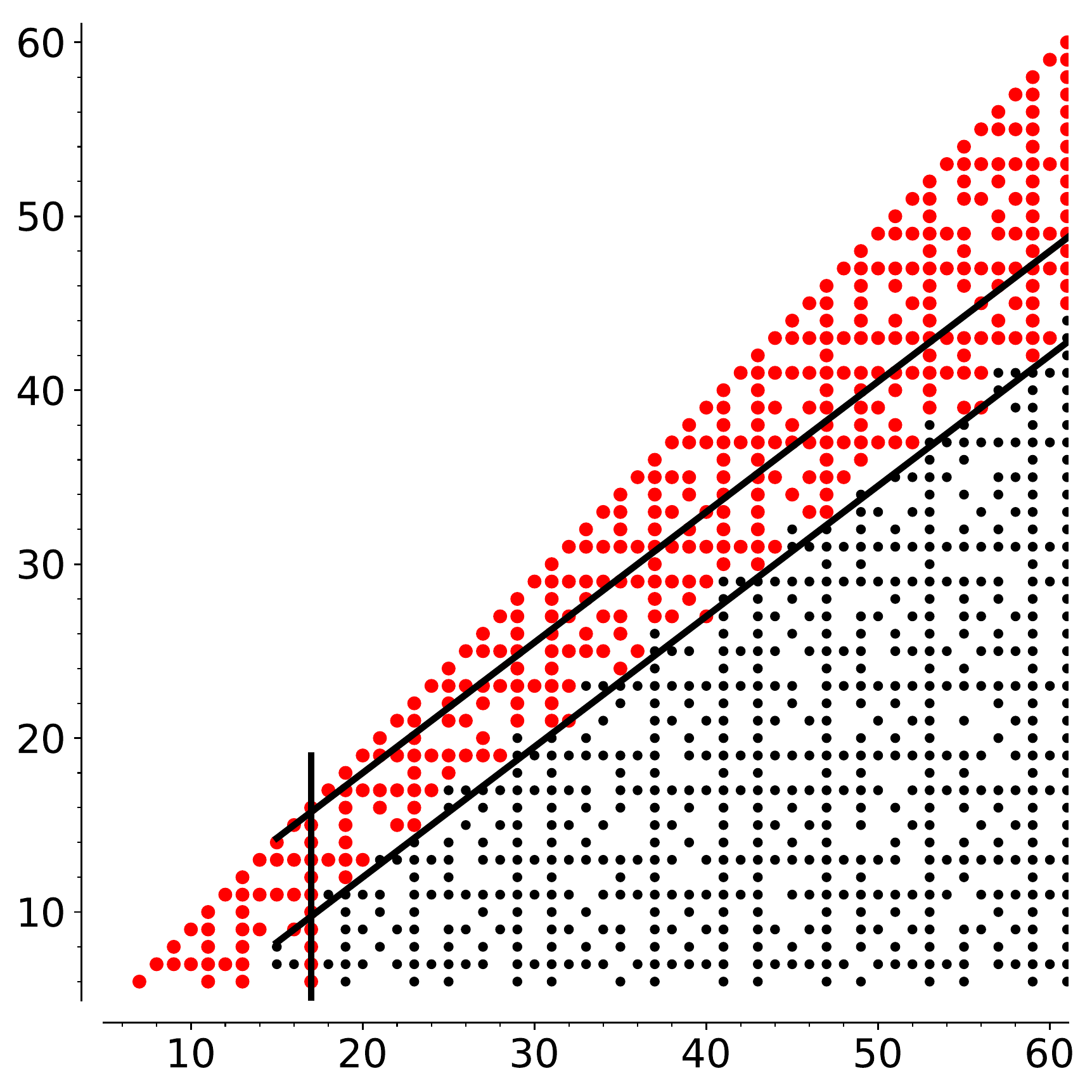}
\end{center}
\caption{Plots for $S = \<2,5\>$ (left) and $S = \<3,4\>$ (right) with a large (red) dot at $(\lambda,\mu)$ if $\mu S + \lambda S$ is tasty, and a small (black) dot if $\mu S + \lambda S$ is bland.  Each linear constraint in Proposition~\ref{p:blandtastyregions} is also depicted.}
\label{f:selfglueplots}
\end{figure}

\begin{prop}\label{p:blandtastyregions}
Fix $S = \<n_1, n_2\>$ and integers $\lambda > \mu > 2n_1n_2 - n_1 - n_2$ with $\gcd(\lambda, \mu) = 1$, and denote the gluing $G = \mu S + \lambda S$.  
\begin{enumerate}[(a)]
\item 
If $\lambda > \frac{n_2}{n_1}\mu + n_2(n_2 - n_1)$, then $G$ is tasty.

\item 
If $\lambda < \frac{n_2}{n_1}\mu - n_2(n_2 - n_1)$, then $G$ is bland.

\end{enumerate}
\end{prop}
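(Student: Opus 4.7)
The plan is to reduce the entire analysis to the structure of the two arithmetic progressions $\mathsf L_S(\lambda)$ and $\mathsf L_S(\mu)$.  Since $S = \<n_1, n_2\>$ has unique Betti element $n_1 n_2$ with $\mathsf L_S(n_1 n_2) = \{n_1, n_2\}$, we have $\Delta(S) = \{d\}$ for $d := n_2 - n_1$, and every $\mathsf L_S(n)$ is an arithmetic progression of common difference $d$.  By the standard gluing formulas, $\Betti(G) = \{\mu n_1 n_2,\,\lambda n_1 n_2,\,\lambda\mu\}$, where $\mathsf L_G(\mu n_1 n_2) = \mathsf L_G(\lambda n_1 n_2) = \{n_1, n_2\}$ and $\mathsf L_G(\lambda\mu) = \mathsf L_S(\lambda) \cup \mathsf L_S(\mu)$.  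The hypothesis $\mu > 2n_1 n_2 - n_1 - n_2$ guarantees $|\mathsf L_S(\lambda)|, |\mathsf L_S(\mu)| \ge 2$, and writing an extremal factorization of $n$ as $a n_1 + b n_2$ with $0 \le a < n_2$ (for the minimum length) or $0 \le b < n_1$ (for the maximum length) gives the sharp bounds
$$\tfrac{n}{n_2} \le L_n := \min\mathsf L_S(n) < \tfrac{n}{n_2} + d \qquad\text{and}\qquad \tfrac{n}{n_1} - d < U_n := \max\mathsf L_S(n) \le \tfrac{n}{n_1}.$$

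For part (a), the hypothesis $\lambda > (n_2/n_1)\mu + n_2 d$ rearranges to $\lambda/n_2 - \mu/n_1 > d$, giving $L_\lambda - U_\mu \ge \lambda/n_2 - \mu/n_1 > d$.  Thus $\mathsf L_S(\mu)$ lies strictly below $\mathsf L_S(\lambda)$ with a gap exceeding $d$, so $\max \Delta_G(\lambda\mu) = L_\lambda - U_\mu$, and this is $\max \Delta(G)$.  Tastiness now follows by checking each Betti element: $\LD_G(\mu n_1 n_2) = \LD_G(\lambda n_1 n_2) = 1/d > 1/\max\Delta(G)$, and writing $k_\nu + 1 = |\mathsf L_S(\nu)|$, a direct count gives
$$\LD_G(\lambda\mu) = \frac{k_\lambda + k_\mu + 1}{(k_\lambda + k_\mu)d + (L_\lambda - U_\mu)},$$
which exceeds $1/(L_\lambda - U_\mu)$ precisely because $L_\lambda - U_\mu > d$.

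For part (b), the goal is $\max \Delta_G(\lambda\mu) \le d$; this forces $\max\Delta(G) = d$, after which $\LD_G(\mu n_1 n_2) = 1/d = 1/\max\Delta(G)$ already witnesses blandness.  Since both $\mathsf L_S(\lambda)$ and $\mathsf L_S(\mu)$ have common difference $d$, the only way a gap in their union can exceed $d$ is at a ``seam'' where one range ends before the other begins, so it suffices to bound both seam gaps $L_\lambda - U_\mu$ and $L_\mu - U_\lambda$ by $d$.  Combining the sharp bounds with condition (b) in the form $\lambda n_1 - \mu n_2 < -n_1 n_2 d$ yields
$$L_\lambda - U_\mu \le \frac{\lambda n_1 - \mu n_2}{n_1 n_2} + \frac{d(2n_1 n_2 - n_1 - n_2)}{n_1 n_2} < \frac{d(n_1 n_2 - n_1 - n_2)}{n_1 n_2} < d,$$
and the symmetric estimate $L_\mu - U_\lambda < d$ follows in the same way, requiring only the (already built-in) inequalities $\lambda > \mu > n_1 n_2 - n_1 - n_2$.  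The main obstacle will be this seam analysis: rigorously confirming that the worst-case inequality holds for \emph{both} seams, and that overlapping AP configurations never create a gap larger than $d$ in the union.
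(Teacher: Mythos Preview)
Your proposal is correct and follows essentially the same route as the paper: both arguments reduce everything to the structure of $\mathsf L_G(\lambda\mu) = \mathsf L_S(\lambda)\cup\mathsf L_S(\mu)$ as a union of two arithmetic progressions of step $d$, establish the same bounds $\tfrac{n}{n_2}\le L_n$ and $U_n\le\tfrac{n}{n_1}$ (up to an additive $d$), and in part~(a) extract the large seam gap $L_\lambda-U_\mu>d$ while in part~(b) bound that seam by $d$.  The only differences are cosmetic: in~(a) you verify $\LD_G(\lambda\mu)>1/\max\Delta(G)$ by an explicit count whereas the paper appeals directly to $|\Delta_G(\lambda\mu)|\ge 2$, and in~(b) you bound both seam gaps by $d$ using the sharper estimates $L_n\le\tfrac{n}{n_2}+\tfrac{(n_2-1)d}{n_2}$ and $U_n\ge\tfrac{n}{n_1}-\tfrac{(n_1-1)d}{n_1}$, whereas the paper shows one seam gap is below $d$ and the other is negative---your sharper constants are in fact needed to make the second seam inequality go through cleanly from the stated hypothesis $\mu>2n_1n_2-n_1-n_2$.
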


\begin{proof}
The lower bound on $\lambda$ and $\mu$ ensures $|\mathsf L_S(\lambda)| \ge 2$ and $|\mathsf L_S(\mu)| \ge 2$, and that $\lambda\mu$ is the largest Betti element of $G$.  
If $\lambda > \frac{n_2}{n_1}\mu + n_2(n_2 - n_1)$, then 
$$\min\mathsf L_S(\lambda) \ge \tfrac{1}{n_2}\lambda > \tfrac{1}{n_1}\mu + (n_2 - n_1) \ge \max\mathsf L_S(\mu) + \max\Delta(S),$$
and thus $\max\Delta(G) = \max\Delta_G(\lambda\mu) > \max\Delta(S)$.  From this, we conclude $G$ is tasty.  
If, on the other hand, $\lambda < \frac{n_2}{n_1}\mu - n_2(n_2 - n_1)$, then
$$\min\mathsf L_S(\lambda) - \max\mathsf L_S(\mu) < (\tfrac{1}{n_2}\lambda + (n_2 - n_1)) - (\tfrac{1}{n_1}\mu - (n_2 - n_1))  < n_2 - n_1,$$
and similarly
$$\max\mathsf L_S(\lambda) - \min\mathsf L_S(\mu) > \tfrac{1}{n_1}\lambda - \tfrac{1}{n_2}\mu - 2(n_2 - n_1) > 0.$$
From these inequalities, we conclude there exists $\delta \in \Delta_G(\lambda\mu)$ with $0 < \delta < n_2 - n_1$, and since $\mathsf L_S(\lambda\mu) = \mathsf L_{S_1}(\lambda) \cup \mathsf L_{S_2}(\mu)$, we must have $\Delta_G(\lambda\mu) = \{\delta, n_2 - n_1\}$.  It follows that $\max\Delta(G) = n_2 - n_1$, so $\Delta_G(\lambda n_1n_2) = \{n_2 - n_1\}$ implies $G$ is bland. 
\end{proof}

\begin{thm}\label{t:tastyproportion}
Given a numerical semigroup $S = \<n_1, n_2\>$, the proportion of tasty gluings of $S$ with itself is $n_1/n_2$.  More precisely, 
$$\lim_{N \to \infty} \frac{\text{\# tasty gluings $\mu S + \lambda S$ with $\lambda, \mu < N$}}{\text{\# gluings $\mu S + \lambda S$ with $\lambda, \mu < N$}} = \frac{n_1}{n_2}.$$
\end{thm}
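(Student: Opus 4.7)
The plan is to translate the counting problem into an area-ratio computation by plotting each ordered pair $(\lambda, \mu)$ of non-atoms of $S$ satisfying $\gcd(\lambda, \mu) = 1$ as a lattice point in the square $[0, N]^2$, and then invoking Proposition~\ref{p:blandtastyregions}. First I would use the symmetry $\mu S + \lambda S = \lambda S + \mu S$ to observe that the Proposition, originally stated under $\lambda > \mu$, classifies the lattice points in \emph{both} triangles separated by the diagonal $\lambda = \mu$. Setting $C = n_2(n_2 - n_1)$, the pair yields a tasty gluing whenever $\lambda > \tfrac{n_2}{n_1}\mu + C$ or $\mu > \tfrac{n_2}{n_1}\lambda + C$, and a bland gluing whenever $\mu < \lambda < \tfrac{n_2}{n_1}\mu - C$ or $\lambda < \mu < \tfrac{n_2}{n_1}\lambda - C$. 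The only unclassified pairs lie in two strips of width $O(1)$ about the lines $\lambda = \tfrac{n_2}{n_1}\mu$ and $\mu = \tfrac{n_2}{n_1}\lambda$, together with the constant-size corner square $[0, 2n_1 n_2 - n_1 - n_2]^2$ where the Proposition's hypothesis fails; this exceptional region has combined area $O(N)$.

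Next I would compute areas. The subset of $[0, N]^2$ above the line $\lambda = \tfrac{n_2}{n_1}\mu$ is a right triangle of area $\tfrac{n_1}{2n_2}N^2$, and by symmetry the subset below $\mu = \tfrac{n_2}{n_1}\lambda$ has the same area, giving total tasty area $\tfrac{n_1}{n_2}N^2 + O(N)$ against the full square area $N^2$. To convert areas into gluing counts, I would appeal to the standard equidistribution estimate
$$\#\{(a,b) \in R \cap \ZZ^2 : \gcd(a,b) = 1\} = \tfrac{6}{\pi^2}\,\text{area}(R) + O(N \log N)$$
valid for any Jordan-measurable $R \subseteq [0, N]^2$, which follows by Möbius inversion from the standard lattice-point count. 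The additional constraints that $\lambda, \mu \in S$ and that neither equals $n_1$ or $n_2$ remove only $O(N)$ lattice points from the ambient square (since $\NN \setminus S$ is finite and there are only two atoms), so they do not affect the leading-order count.

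Applying this estimate to the tasty region and to $[0, N]^2$ yields a numerator of $\tfrac{6 n_1}{\pi^2 n_2} N^2 + O(N \log N)$ and a denominator of $\tfrac{6}{\pi^2} N^2 + O(N \log N)$, whose ratio tends to $n_1/n_2$ as required. The one subtle step — and the only place where care is needed — is handling the uncertain strips and corner square where Proposition~\ref{p:blandtastyregions} is silent: however, since each has area $O(N)$, the number of coprime lattice points inside is $O(N \log N)$, which is absorbed into the error term and thus contributes nothing in the limit. No new factorization-theoretic input is needed beyond Proposition~\ref{p:blandtastyregions}; the remaining work is purely a density computation for coprime pairs in a pair of polygonal wedges.
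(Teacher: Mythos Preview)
Your proposal is correct and follows essentially the same route as the paper's proof: both invoke Proposition~\ref{p:blandtastyregions} to classify all but an $O(N)$-area set of pairs $(\lambda,\mu)$, then appeal to equidistribution of coprime lattice points (the paper via \cite{lehmerasymptotic}, you via the $6/\pi^2$ density and explicit error terms) to reduce the count ratio to an area ratio. One minor slip worth fixing: the locus where the hypothesis $\min(\lambda,\mu) > 2n_1n_2 - n_1 - n_2$ of Proposition~\ref{p:blandtastyregions} fails is a pair of axis-parallel strips of area $O(N)$, not merely the ``constant-size corner square'' you name, but this does not affect your $O(N)$ area estimate or the conclusion.
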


\begin{proof}
Let $L$ denote the limit on the left hand side above.  Writing
$$A = \{(a, b) \in \ZZ^2: \gcd(a, b) = 1\}$$
and $N_0 = 2n_1n_2 - n_1 - n_2$, Proposition~\ref{p:blandtastyregions}(a) yields a lower bound on $L$ of 
\begin{align*}
L
&\ge \lim_{N \to \infty} \frac{\text{\# $(\lambda,\mu) \in A \cap [N_0,N]^2$ with $\lambda > \mu$ and $\lambda > \frac{n_2}{n_1}\mu + n_2(n_2 - n_1)$}}{\text{\# $(\lambda,\mu) \in A \cap [2,N]^2$ with $\lambda > \mu$}} \\
&= \lim_{N \to \infty} \frac{\text{\# $(\lambda,\mu) \in [2,N]^2$ with $\lambda > \mu$ and $\lambda > \frac{n_2}{n_1}\mu + n_2(n_2 - n_1)$}}{\text{\# $(\lambda,\mu) \in [2,N]^2$ with $\lambda > \mu$}} \\
&= \lim_{N \to \infty} \frac{\text{\# $(\lambda,\mu) \in [2,N]^2$ with $\tfrac{n_1}{n_2}\lambda > \mu$}}{\text{\# $(\lambda,\mu) \in [2,N]^2$ with $\lambda > \mu$}}
= \frac{n_1}{n_2}
\end{align*}
where the first equality follows from \cite[Chapter~IV, Theorem~1]{lehmerasymptotic}.  
Proposition~\ref{p:blandtastyregions}(b) yields an identical upper bound for $L$, so we conclude $L = n_1/n_2$.  
\end{proof}




\end{document}